\def\p{\partial}
\def\ve{\varepsilon}
\def\f{\frac}
\def\na{\nabla}
\def\la{\lambda}
\def\La{\Lambda}
\def\al{\alpha}
\def\vp{\varphi}
\def\O{\Omega}
\def\o{\omega}
\def\th{\theta}
\def\g{\gamma}
\def\G{\Gamma}
\def\si{\sigma}
\def\Si{\Sigma}
\def\dl{\delta}
\def\ds{\displaystyle}
\def\i{\infty}
\def\no{\nonumber}
\def\beq{\begin{equation}}
\def\eeq{\end{equation}}
\def\ben{\begin{eqnarray}}
\def\een{\end{eqnarray}}
\def\bec{\begin{cases}}
\def\eec{\end{cases}}
\newcommand{\bR}{{\mathbb R}}
\newcommand{\vF}{{\mathcal F}}
\newcommand{\vG}{{\mathcal G}}
\begin{document}
\newtheorem{theorem}{Theorem}
\newtheorem{lemma}{Lemma}
\renewcommand{\thelemma}
{\arabic{section}.\arabic{lemma}}
\newtheorem{corollary}[lemma]{Corollary}
\newtheorem{remark}{Remark}
\renewcommand{\theremark}
{\arabic{section}.\arabic{remark}}
\renewcommand{\theequation}
{\arabic{section}.\arabic{equation}}
\renewcommand{\thetheorem}
{\arabic{section}.\arabic{theorem}}
\makeatletter
\@addtoreset{equation}{section} \makeatother \makeatletter
\makeatletter
\@addtoreset{lemma}{section} \makeatother \makeatletter
\makeatletter
\@addtoreset{remark}{section} \makeatother \makeatletter
\makeatletter \@addtoreset{theorem}{section} \makeatother
\makeatletter
\makeatother
\title{\bf {Formation and construction of a multidimensional shock wave for the first order  hyperbolic
conservation law with smooth initial data}}
\author{Yin Huicheng$^{1,*}$, \quad Zhu Lu$^{2,}$
\footnote{Yin Huicheng (huicheng@nju.edu.cn, 05407@njnu.edu.cn)
and Zhu Lu (zhulu@hhu.edu.cn) are
supported by the NSFC (No.11731007, No.12001162).}\vspace{0.5cm}\\
1.  School of Mathematical Sciences and Institute of Mathematical Sciences,\\
Nanjing Normal University, Nanjing, 210023, China.
\\
\vspace{0.5cm}
2. College of Science, Hohai University, Nanjing, 210098, China.}
\date{}
\maketitle

\begin{abstract}
In this paper, the problem on formation and construction of a multidimensional
shock wave  is studied for the first order conservation law $\p_t u+\p_x F(u)+\p_y G(u)=0$ with smooth
initial data $u_0(x,y)$. It is well-known that the smooth solution $u$ will blow up on the time $T^*=-\f{1}{\min{H(\xi,\eta)}}$
when $\min{H(\xi,\eta})<0$ holds for $H(\xi,\eta)=\p_{\xi}(F'(u_0(\xi,\eta)))+\p_{\eta}(G'(u_0(\xi,\eta)))$,
more precisely, only the first order derivatives $\na_{t,x,y}u$ blow up  on $t=T^*$ meanwhile $u$ itself is still
continuous until $t=T^*$. Under the generic nondegenerate condition of $H(\xi,\eta)$, we construct a local weak
entropy solution $u$ for $t\ge T^*$ which is not uniformly Lipschitz continuous on two sides of
a shock surface $\Sigma$. The strength of the constructed shock is zero on the initial
blowup curve $\G$ and then gradually increases for $t>T^*$. Additionally,  in the neighbourhood of $\G$,
some detailed and precise descriptions on the singularities of solution $u$ are given.

\end{abstract}

\begin{quote} {\bf Keywords:} Hyperbolic conservation law, multidimensional shock wave, generic nondegenerate condition,
entropy condition, Rankine-Hugoniot condition.
\end{quote}
\vskip 0.2 true cm

\begin{quote} {\bf Mathematical Subject Classification 2000:} 35L05, 35L72 \end{quote}


\section{Introduction}

\subsection{Setting of the problem and statement of the main result}
In this paper, we shall study the  problem of a multidimensional shock formation for the
following first order 2D conservation law
\beq\label{2.1}
\left\{
  \begin{array}{l}
    \p_t u+\p_x F(u)+\p_y G(u)=0,  \\
    u(0,x,y)=u_0(x,y),
  \end{array}
\right.
\eeq
where $(t,x,y)\in\bR_+\times\bR^2$, $F(u)$ and $G(u)$ are $C^5$ smooth functions of $u$, $u_0(x,y)\in C^4(\Bbb R^2)$.
Let $f(u)=F'(u)$ and $g(u)=G'(u)$. Define the characteristics $(x(t;\xi,\eta), y(t;\xi,\eta))$ of \eqref{2.1} starting from
the initial point $(\xi,\eta)$ as follows
\beq\label{2.2}
\left\{
\begin{array}{l}
\f{d}{dt}x(t;\xi,\eta)=f(u(t,x(t;\xi,\eta),y(t;\xi,\eta))),\\
\f{d}{dt}y(t;\xi,\eta)=g(u(t,x(t;\xi,\eta),y(t;\xi,\eta))),\\
x(0,\xi,\eta)=\xi,\ y(0,\xi,\eta)=\eta.
\end{array}
\right.
\eeq
As long as the $C^1$ solution $u$ of \eqref{2.1} exists (actually $u\in C^4$ due to $u_0\in C^4$ and $F(u),G(u)\in C^5$),
it then follows from \eqref{2.1} and \eqref{2.2}
that along the characteristics $(t, x(t;\xi,\eta), y(t;\xi,\eta))$,
\beq\label{2.3}
\f{d}{dt}u(t,x(t;\xi,\eta),y(t;\xi,\eta))\equiv0,
\eeq
which derives $u(t,x(t;\xi,\eta),y(t;\xi,\eta))\equiv u_0(\xi,\eta)$. In this case,
we have that from \eqref{2.2}
\beq\label{2.5}
\left\{
\begin{array}{l}
x(t;\xi,\eta)=\xi+tf(u_0(\xi,\eta)),\\
y(t;\xi,\eta)=\eta+tg(u_0(\xi,\eta)).
\end{array}
\right.
\eeq
Obviously, if $\xi=\xi(t,x,y)\in C^1$ and $\eta=\eta(t,x,y)\in C^1$ are obtained from \eqref{2.5},
then $u(t,x,y)=u_0(\xi(t,x,y),\eta(t,x,y))\in C^1$ will be the classical solution of \eqref{2.1}. In fact,
in terms of ${\rm det}(\f{\p(x,y)}{\p(\xi,\eta)})=1+tH(\xi,\eta)$ with
$$
H(\xi,\eta)=\p_{\xi}(f(u_0(\xi,\eta)))+\p_{\eta}(g(u_0(\xi,\eta))),
$$
by the implicit function theorem $(\xi(t,x,y), \eta(t,x,y))\in C^1$ can be achieved for all $t\ge 0$ when $\min{H(\xi,\eta})\ge 0$;
or for $0\le t<T^*$  when $\min{H(\xi,\eta})<0$ and $T^*=-\f{1}{\min{H(\xi,\eta)}}$ since ${\rm det }(\f{\p(x,y)}{\p(\xi,\eta)})>0$ holds
in these two cases.
For $\min{H(\xi,\eta})<0$, it follows from Theorem 3.1 of \cite{Majda-3} that
the $C^1$ solution $u$ of \eqref{2.1} blows up on $T^*=-\f{1}{\min{H(\xi,\eta)}}$, more
precisely, the first order derivatives $\na_{t,x,y}u$ blow up on $t=T^*$ meanwhile $u$ itself is still
continuous until $t=T^*$. In the paper, we are concerned with  the multidimensional
shock formation problem of \eqref{2.1} for $t\ge T^*$ when $\min{H(\xi,\eta})<0$ happens.

For brevity, we denote $\phi(\xi,\eta)=f(u_0(\xi,\eta))$ and $\psi(\xi,\eta)=g(u_0(\xi,\eta))$.
Then
\beq\label{H-0}
H(\xi,\eta)=\p_{\xi}\phi(\xi,\eta)+\p_{\eta}\psi(\xi,\eta).
\eeq
In addition, we pose the following
generic nondegenerate condition on $H(\xi,\eta)$:

{\bf There exists a unique point $(\xi_0,\eta_0)$ such that $H(\xi_0,\eta_0)=\min{H(\xi,\eta)}$, and
$\p_{\xi}H(\xi_0,\eta_0)=\p_{\eta}H(\xi_0,\eta_0)=0$, $(\na^2_{\xi,\eta}H)(\xi_0,\eta_0)$ is symmetric positive.
\qquad \qquad \qquad \qquad \qquad \qquad \qquad \quad (GNC)}

Here we point out that (GNC) has been used  in \cite{A1}
to show the blowup of smooth small data solution
to the second order quasilinear wave equations when the corresponding null conditions
are not fulfilled. For convenience and without loss of generality, we assume that
in (GNC),
\beq\label{Y-0}
\text{$(\xi_0,\eta_0)=(0,0)$ and $\phi(0,0)=\psi(0,0)=0$,}
\eeq
\beq\label{2.6}
H(0,0)=\min_{(\xi,\eta)\in\bR^2}H(\xi,\eta)=-1,
\eeq
\beq\label{2.7}
\left(\begin{array}{cc}\p^2_\xi H & \p^2_{\xi\eta} H\\
\p^2_{\xi\eta} H & \p^2_\eta H
\end{array}\right)(0,0)=\left(\begin{array}{cc}6 & 0\\
0 & 6\end{array}\right)
\eeq
and
\beq\label{Y-1}
\p_\eta\psi(0,0)\geq-\f{1}{2}\geq\p_\xi\phi(0,0).
\eeq

In this case, the unique blowup point $(1,0,0)$ of \eqref{2.1} will appear firstly. Additionally,
from \eqref{Y-0}-\eqref{Y-1}, then there exists a small $\dl>0$ such that for
$(\xi,\eta)\in B_{\dl}=\{(\xi,\eta):\ |\xi|+|\eta|\le \dl\}$,
\beq
\label{2.9}\p_\eta\psi\geq-\f{2}{3},\ H\le-\f{3}{4},\ \textrm{and}\
\phi,\psi,\p_\xi H, \p_\eta H, \p^2_{\xi\eta}H=O(\dl),\ \p_{\xi}^2 H, \p_{\eta}^2H=6+O(\dl).
\eeq

Under conditions \eqref{Y-0}-\eqref{Y-1}, we will prove that for $t\ge T^*=1$, \eqref{2.1}
admits a shock surface $\Sigma$: $x=\vp(t,y)\in C^2$, which  starts
from the space-like blowup curve $\G:\ t=T^*(y),\ x=x^*(y),\ y\in(-\dl,\dl)$
($\G$ will be defined in Lemma \ref{lemma 1.1} below). Denote $u_{-}(t,x,y)$ and $u_{+}(t,x,y)$ by the solution
of \eqref{2.1} on the left ($x<\vp(t)$) and right ($x>\vp(t)$) side of $\Sigma$ respectively (see Figure 1). Then $\vp(t,y)$
satisfies the Rankine-Hugoniot condition on $\Sigma$:
\beq\label{R-H}
\p_t\vp(t,y)[u]-[F(u)]+\p_y\vp(t,y)[G(u)]=0,
\eeq
where $[u]=u_+-u_-$ with $u_\pm=u_\pm(t,x,y)|_{\Sigma}=u_\pm(t,\vp(t,y),y)$.
Note that the formation of shock $\Sigma$ is due to the compression of characteristics, then
the geometric entropy condition on $\Sigma$ is
\beq\label{entropy condition}
\left(1,f(u_+),g(u_+)\right)\cdot\left(-\p_t\vp(t,y),1,-\p_y\vp(t,y)\right)
<0<\left(1,f(u_-),g(u_-)\right)\cdot\left(-\p_t\vp(t,y),1,-\p_y\vp(t,y)\right).
\eeq
Note that $\left(1,f(u_\pm),g(u_\pm)\right)$ is just the tangent direction
of characteristics $\g_\pm$, where $\g_\pm$ stands for the right/left characteristics
of \eqref{2.1} starting from the point $(t,\vp(t,y),y)\in \Sigma$  (see Figure 2).

\begin{figure}[h]
\centering
\includegraphics[scale=0.30]{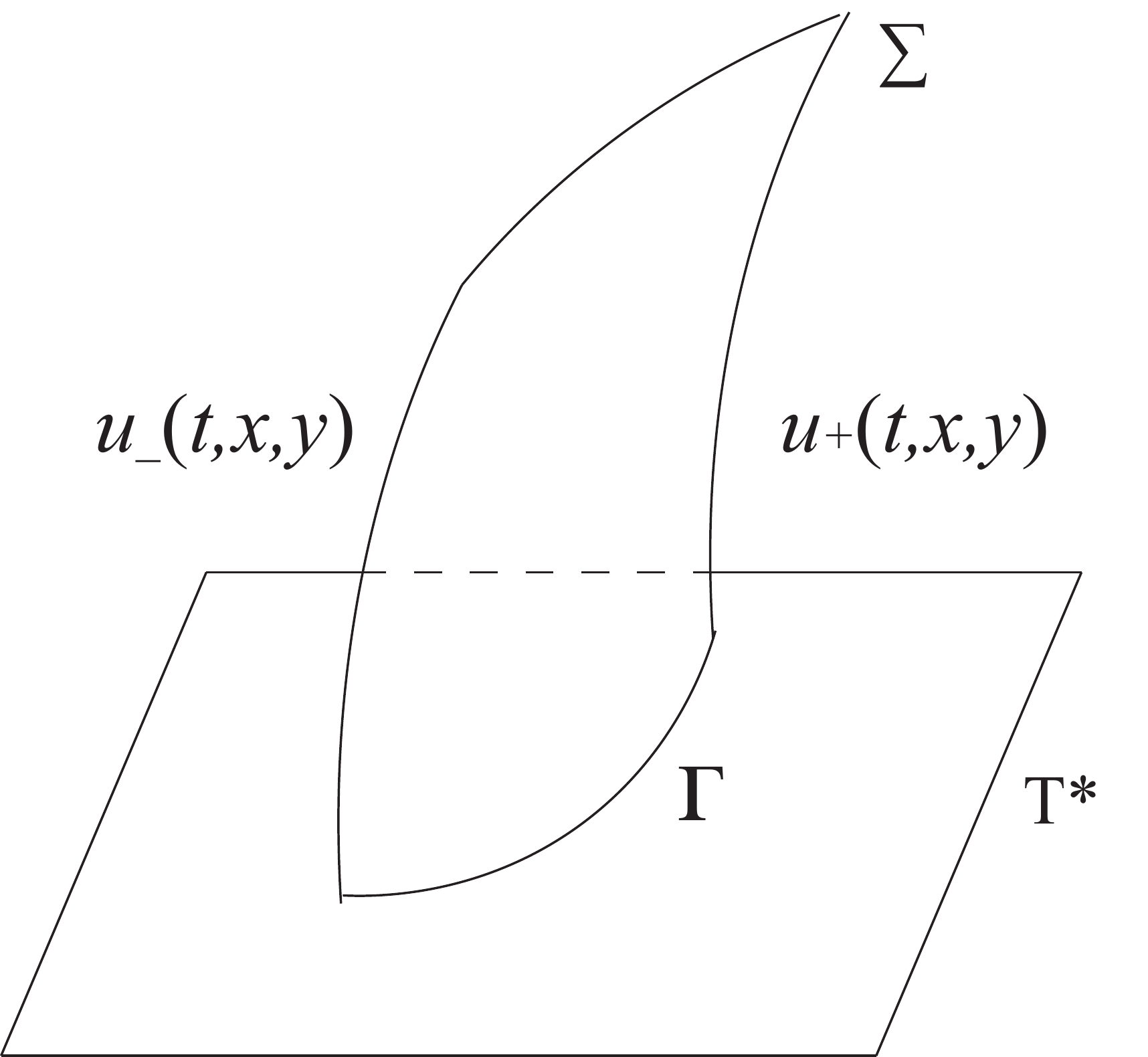}
\end{figure}
\centerline{\bf Figure 1. Shock solution $u=(u_-,u_+)$ and shock surface $\Sigma$ starting from blowup curve $\G$.}

\vskip 0.3 true cm

\begin{figure}[h]
\centering
\includegraphics[scale=0.30]{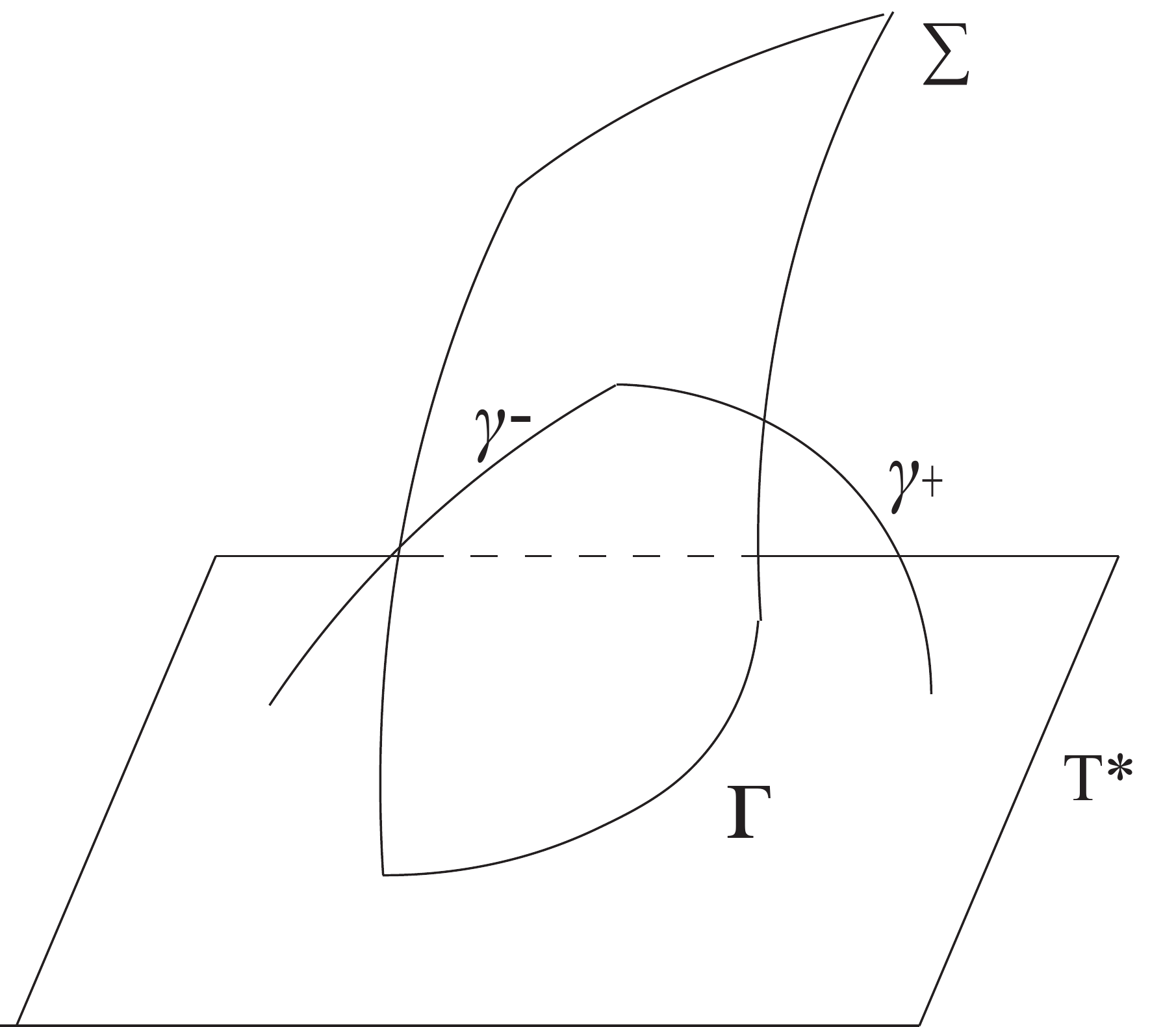}
\end{figure}
\centerline{\bf Figure 2. The characteristics $\g_-$ and $\g_+$.}

\vskip 0.5 true cm

The main results in this paper are
\begin{theorem}\label{theorem}
Under conditions \eqref{Y-0}-\eqref{Y-1}, for small constants $\ve>0$ and $\dl>0$,

\noindent (1) there exist a space-like blowup curve $\G$ for $t\ge 1$:
$t=T^*(y),\ x=x^*(y)$ with $y\in(-\dl,\dl)$ and $(T^*(0),x^*(0))=(1,0)$, and a shock surface $\Sigma:$ $x=\vp(t,y)$
starting from $\G$ in the domain $\O=\{(t,x,y): 1\le t<T^*(y)+\ve, |x|<\dl, |y|<\dl\}$ such that
the Rankine-Hugoniot condition \eqref{R-H} and the entropy condition \eqref{entropy condition} hold.

\noindent (2) $$x=\vp(t,y)\in C^{2}(\O)$$
and $$u\in C^1(\O\setminus\Sigma).$$
\noindent (3) near $\G$ and $t\in (1-\ve, 1+\ve)$,
\ben
&\left|u(t,x,y)-u(T^*(y),x^*(y),y)\right|=O\left(|t-T^*(y)|^\f{1}{2}+|x-x^*(y)-
(\phi^*+\psi^*\f{\p_\eta\phi^*}{\p_\xi\phi^*})(t-T^*(y))|^\f{1}{3}\right),\no\\\label{e0}&&\\
&\left|\nabla_{t,x,y}u(t,x,y)\right|=O\left(\left(|t-T^*(y)|^\f{1}{2}+|x-x^*(y)-
(\phi^*+\psi^*\f{\p_\eta\phi^*}{\p_\xi\phi^*})(t-T^*(y))|^{\f{1}{3}}\right)^{-2}\right),\no\\\label{e1}&&\\
&\left|\p_tu(t,x,y)+\left(\phi^*+\psi^*\f{\p_\eta\phi^*}{\p_\xi\phi^*}\right)\p_xu(t,x,y)\right|
=O\left(\left(|t-T^*(y)|^{\f{1}{2}}+|x-x^*(y)-
(\phi^*+\psi^*\f{\p_\eta\phi^*}{\p_\xi\phi^*})(t-T^*(y))|^{\f{1}{3}}\right)^{-1}\right),\no\\\label{eT}&&
\een
where $\p_t+\left(\phi^*+\psi^*\f{\p_\eta\phi^*}{\p_\xi\phi^*}\right)\p_x$ is the tangent derivative
along the tangent direction $(1,\phi^*+\psi^*\f{\p_\eta\phi^*}{\p_\xi\phi^*},0)$ of $\Si$ at the point
$(T^*(y),x^*(y),y)\in\G$ with the variable $y$ being fixed and $\phi^*=\phi(\xi(t,x,y), \eta(t,x,y))|_{t=T^*(y),x=x^*(y)}$
(the meanings of $\psi^*$, $\p_\eta\phi^*$ and $\p_\xi\phi^*$ are the same as $\phi^*$).
\end{theorem}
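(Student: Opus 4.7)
My plan is to work entirely in the Lagrangian coordinates \eqref{2.5}. Because $\phi=f(u_0)$ and $\psi=g(u_0)$ are functionally dependent on $u_0$, the Jacobian of the characteristic map collapses to $1+tH(\xi,\eta)$, which vanishes to precisely second order at $(t,\xi,\eta)=(1,0,0)$ by \eqref{2.7}. Consequently, for each $t$ slightly larger than $1$ the critical set $\{1+tH=0\}$ is a small closed $C^2$ curve around the origin, whose image in the $(x,y)$-plane is a Whitney-type caustic with exactly two cusp points. I would build the blowup curve $\Gamma$ as the smooth locus of these cusp points: the system
\[
1+tH(\xi,\eta)=0,\qquad \nabla_{\xi,\eta}H(\xi,\eta)\cdot\tau(\xi,\eta)=0
\]
(with $\tau$ a tangent to $\{1+tH=0\}$) is solvable by the implicit function theorem using the positive definiteness of $\nabla^2_{\xi,\eta}H(0,0)$ from (GNC), producing $C^2$ functions $T^*(y),\ x^*(y),\ \xi^*(y),\ \eta^*(y)$ with $T^*(0)=1$ and $x^*(0)=\xi^*(0)=\eta^*(0)=0$.

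For $(t,y)$ just above $\Gamma$ and $x$ between the two cusp branches, the characteristic map has exactly three preimages, which I label $(\xi_-,\eta_-),(\xi_m,\eta_m),(\xi_+,\eta_+)$; I would take $u_\pm:=u_0(\xi_\pm,\eta_\pm)$ on the two outer sheets and discard the middle one by the entropy condition \eqref{entropy condition}. To pin down $\varphi$ I would then solve the $5\times 5$ nonlinear system
\[
\xi_\pm+t\phi(\xi_\pm,\eta_\pm)=\varphi,\qquad \eta_\pm+t\psi(\xi_\pm,\eta_\pm)=y,\qquad \eqref{R-H}
\]
in the unknowns $(\xi_-,\eta_-,\xi_+,\eta_+,\varphi)$ with $(t,y)$ as parameters, starting from the fully coalesced root $\xi_\pm=\eta_\pm=0,\ \varphi=0$ at $(t,y)=(1,0)$.

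The hard part is exactly this coalescence: on $\Gamma$ one has $(\xi_-,\eta_-)=(\xi_+,\eta_+)=(\xi^*(y),\eta^*(y))$, the four characteristic equations collapse in pairs, and the naive $5\times 5$ linearization is singular. I would desingularize by the symmetric/antisymmetric splitting
\[
\sigma_\xi=\tfrac{1}{2}(\xi_++\xi_-),\ \rho_\xi=\tfrac{1}{2}(\xi_+-\xi_-),\qquad \sigma_\eta=\tfrac{1}{2}(\eta_++\eta_-),\ \rho_\eta=\tfrac{1}{2}(\eta_+-\eta_-),
\]
Taylor expand the characteristic equations and \eqref{R-H} to third order in $(\rho_\xi,\rho_\eta)$, use the functional dependence of $\phi,\psi$ on $u_0$ to solve one antisymmetric block for $\rho_\eta$ as a smooth function of $\rho_\xi$, and extract from the remaining antisymmetric part a scalar Whitney-cusp relation $a\rho_\xi^{2}=b(t-T^*(y))+O(\ldots)$, while the symmetric part together with \eqref{R-H} gives a nondegenerate reduced system for $(\sigma_\xi,\sigma_\eta,\varphi)$. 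After rescaling $\rho_\xi=R\sqrt{t-T^*(y)}$ the problem becomes a standard implicit function one and supplies the unique $C^2$ function $\varphi(t,y)$ satisfying \eqref{R-H} and the entropy condition; $u\in C^1(\Omega\setminus\Sigma)$ then follows from the ordinary implicit function theorem on each sheet where $1+tH$ is nonzero.

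Finally, the pointwise estimates \eqref{e0}--\eqref{eT} come from the cubic cusp normal form of the characteristic map near $\Gamma$. In adapted coordinates centered at $(T^*(y),x^*(y),y)$ with $y$ held fixed, the map takes the model form
\[
x-x^*(y)-c\,(t-T^*(y))=\alpha\xi^{3}+\beta(t-T^*(y))\xi+O(\ldots),
\]
where $c=\phi^*+\psi^*\partial_\eta\phi^*/\partial_\xi\phi^*$ is the tangent slope to $\Sigma$ along $\Gamma$. Cardano-type inversion of this cubic gives $\xi=O(|t-T^*(y)|^{1/2}+|x-x^*(y)-c(t-T^*(y))|^{1/3})$, whence $u-u^*=u_0(\xi,\eta)-u_0(\xi^*,\eta^*)=O(\xi)$, which is \eqref{e0}. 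Differentiating the implicit relation and inverting yields \eqref{e1}, and the tangential derivative $\partial_t+c\partial_x$ annihilates the leading $\partial_t$-singularity (because $(1,c,0)$ is tangent to $\Sigma$ at the base point), producing the improved rate in \eqref{eT}.
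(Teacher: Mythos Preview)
Your proposal has a genuine gap at the step where you solve for $\vp$. The Rankine--Hugoniot condition \eqref{R-H} is a first-order PDE for $\vp(t,y)$, not an algebraic relation: it involves $\p_t\vp$ and $\p_y\vp$. Consequently the object you call a ``$5\times5$ nonlinear system'' in $(\xi_-,\eta_-,\xi_+,\eta_+,\vp)$ is not an algebraic system amenable to the implicit function theorem, even after your symmetric/antisymmetric desingularization and the rescaling $\rho_\xi=R\sqrt{t-T^*(y)}$. What one actually obtains, after eliminating $(\xi_\pm,\eta_\pm)$ via the four characteristic equations, is a quasilinear scalar equation $\p_t\vp + h_1(t,y,\vp)\,\p_y\vp = h_2(t,y,\vp)$ whose coefficients $h_i$ fail to be Lipschitz in $(t,y,\vp)$ along $\G$ (their derivatives blow up because $\na u_\pm$ does). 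The paper handles this by introducing $s=(t-T^*(y))^{1/2}$ and $\la=\bigl(\vp-x^*(y)-(\phi^*+\psi^*\p_\eta\phi^*/\p_\xi\phi^*)\,s^2\bigr)/s^3$, reducing \eqref{R-H} to a Fuchsian-type equation $sC_0\p_s\la+s^2C_1\p_y\la=C_2$, and then solving the associated singular characteristic ODE system (of the form $s\,d\La/ds=\ldots$) by an explicit iteration in the Appendix. None of this is ``a standard implicit function'' step; without it you have neither existence nor the claimed $C^2$ regularity of $\vp$.

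A second, smaller issue: your defining system for $\G$ is vacuous as written. For fixed $t$ the set $\{1+tH=0\}$ is the level set $\{H=-1/t\}$, so any tangent $\tau$ to it satisfies $\na_{\xi,\eta}H\cdot\tau=0$ automatically; this does not single out cusp points. The correct cusp condition asks that $\tau$ lie in the kernel of the differential of the characteristic map, a different vector. The paper bypasses this geometry entirely by exploiting assumption \eqref{Y-1}: since $1+t\p_\eta\psi>0$ near the blowup, one can solve $y=\eta+t\psi(\xi,\eta)$ for $\eta=Y(t,\xi,y)$ and reduce to a \emph{single} Lagrangian coordinate $\xi$. The blowup curve is then obtained, for each fixed $y$, from the nondegenerate $2\times2$ system $D=\p_\xi D=0$ in $(t,\xi)$ with $D=1+tH(\xi,Y(t,\xi,y))$. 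This reduction to one Lagrangian variable is the paper's organizing device throughout---the cusp expansions, the two roots $\xi_\pm$, the R--H analysis, and the asymptotics \eqref{e0}--\eqref{eT} are all carried out in $(t,\xi,y)$---so your extra $(\sigma_\eta,\rho_\eta)$ unknowns and the attendant $5\times5$ bookkeeping are never needed.
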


\subsection{Remarks and sketch of proof}

\begin{remark}
Theorem \ref{theorem} can be extended into the more general multidimensional first order hyperbolic conservation law
\beq\label{Y-2}
\left\{
  \begin{array}{l}
\p_t u+\ds\sum_{i=1}^n\p_i(F_i(u))=0,\\
u(0,x)=u_0(x),
\end{array}
\right.
\eeq
where $x=(x_1, \cdot\cdot\cdot, x_n)$, $u_0(x)\in C^4(\Bbb R^n)$ ($n\ge 2$), and
$F_i(u)$ $(1\le i\le n)$ is $C^5$ smooth on its argument $u$. If follows from Theorem 3.1 of \cite{Majda-3} that
the solution $u$ of \eqref{Y-2} blows up on $T^*=-\ds\f{1}{\min{H(\xi)}}$
with $H(\xi)=\ds\sum_{i=1}^n\p_{\xi_i}\bigl(F'_i(u_0(\xi))\bigr)$ as long as $\min H(\xi)<0$.
The corresponding generic nondegenerate condition on $H(\xi)$ is as follows

{\bf There exists a unique point $\xi_0\in \Bbb R^n$ such that $H(\xi_0)=\min{H(\xi)}$, and
$\nabla_{\xi}H(\xi_0)=0$, $(\nabla^2_\xi H)(\xi_0)$ is symmetric positive.}

\end{remark}

\begin{remark}
For the 1-D conservation law
\begin{equation}\label{0-1}
\left\{
\begin{aligned}
&\p_tv+\p_xf(v)=0,\\
&v(0,x)=v_0(x),
\end{aligned}
\right.
\end{equation}
where $f(v)\in C^2(\Bbb R)$ and $v_0(x)\in C^1(\Bbb R)$.
It is well-known that the $C^1$ solution $v$ of \eqref{0-1} will blow up at the time $T^*=-\f{1}{\min{g'(x)}}$
with $g(x)=f'(v_0(x))$ and $\min_{x\in\Bbb R}{g'(x)}<0$. If we further assume $g(x)\in L^{\infty}(\Bbb R)\cap C^p(\Bbb R)$
with $p\ge 4$, and pose the following generic nondegenerate condition:
\begin{align}\label{0-2}
\text{\bf There exists a unique point $x_0$ such that $g'(x_0)=\min {g'(x)}<0, g''(x_0)=0, g^{(3)}(x_0)>0$.}
\end{align}
Then by Theorem 2 of \cite{Le94}, a local weak entropy  solution $u$  of \eqref{0-1} together with the shock curve $x=\vp(t)$ starting from
the blowup point $(T^*, x^*=x_0+g(x_0)T^*)$ can be locally obtained. Moreover,
$\vp(t)\in C^p(T^*, T^*+\ve)\cap C^{\f{p}{2}}[T^*, T^*+\ve)$, and if $g(x_0)=0$, then
in some neighbourhood of $(T^*,x^*)$,
\begin{equation}\label{0-4}
\left\{
\begin{aligned}
&|v(t,x)-v(T^*,x^*)|\le C((t-T^*)^3+(x-x^*)^2)^{\f16},\\
&|\p_tv(t,x)|\le C{((t-T^*)^3+(x-x^*)^2)}^{-\f16},\\
&|\p_xv(t,x)|\le C{((t-T^*)^3+(x-x^*)^2)}^{-\f13}.\\
\end{aligned}
\right.
\end{equation}
By comparing \eqref{0-4} with \eqref{e0}-\eqref{e1}, the descriptions on the singularities of $\p_{x}v$ and $\na_{x,y}u$,
$\p_tv$ and $\p_t u+\left(\phi^*+\psi^*\f{\p_\eta\phi^*}{\p_\xi\phi^*}\right)\p_x u$
are analogous. Note that $\p_t v$ is actually the tangent direction along the shock curve at the blowup point $(T^*,x^*)$, which
corresponds to $\p_t u+\left(\phi^*+\psi^*\f{\p_\eta\phi^*}{\p_\xi\phi^*}\right)\p_x u$ in \eqref{eT}.
On the other hand, when condition \eqref{0-2}
is removed, we also study the formation and construction of the shock solution to \eqref{0-1} in \cite{Y-Z}.
\end{remark}

\begin{remark}
We point out that the shock formation problem in Theorem 1.1 is different from the usual Riemann problem
with the discontinuous shock initial data on $t=T^*$. For the latter, the initial data $u_{\pm}(T^*,x,y)$ are
piecewise smooth on the left/right side of $\G$ and are
discontinuous across $\G$ (see Figure 3 and \cite{Majda-1}-\cite{Majda-2}, \cite{G.M}), then the shock solution  $u_{\pm}(t,x,y)$ are also
piecewise smooth on the left/right side of $\Sigma$  for $t\ge T^*$, where $\Sigma$ is the resulting shock surface starting
from $\G$.

\begin{figure}[h]
\centering
\includegraphics[scale=0.39]{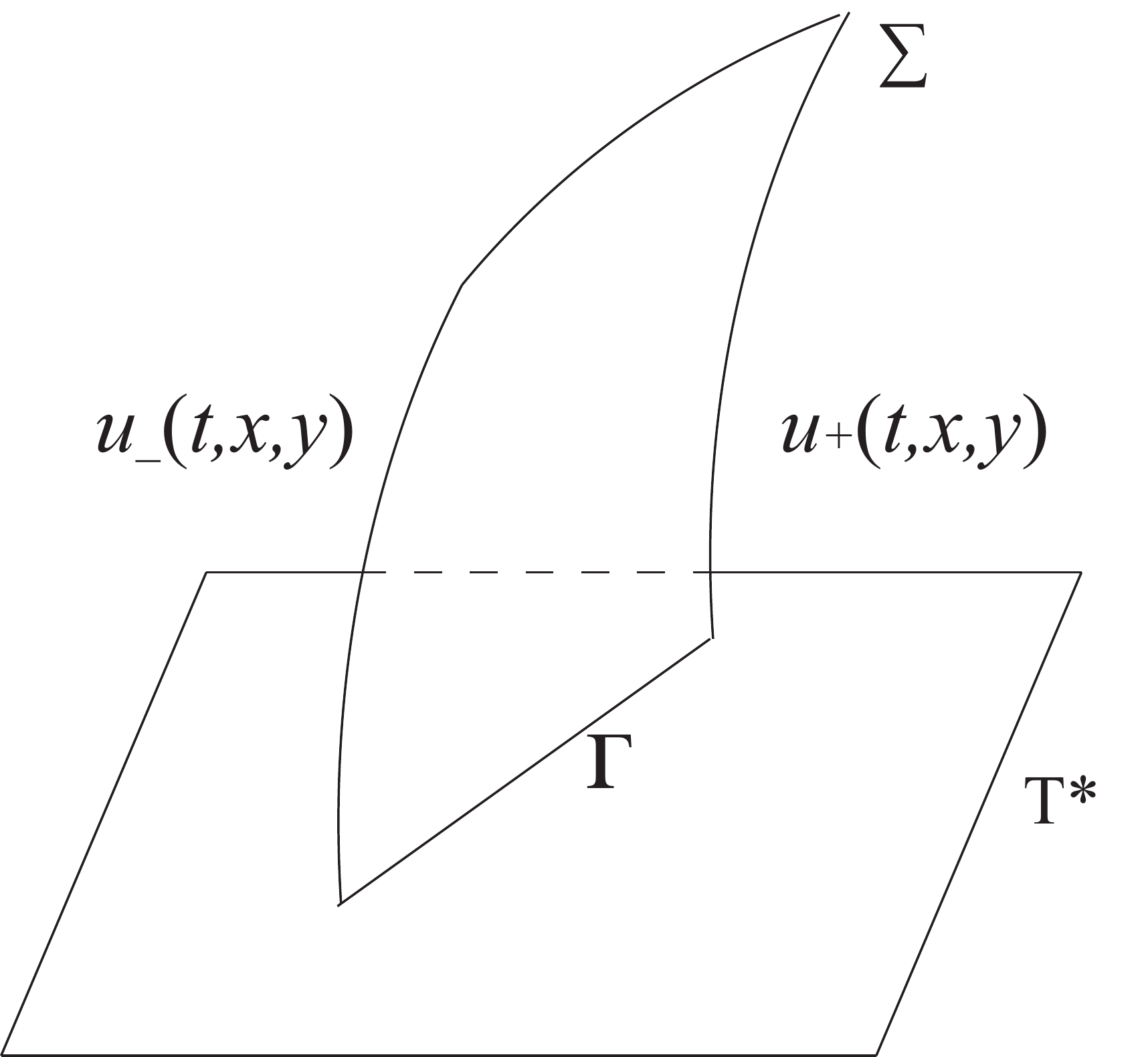}
\end{figure}
\centerline{\bf Figure 3. Riemann problem of \eqref{2.1} on $T^*$.}

\vskip 0.5 true cm

However, in Theorem 1.1, the initial datum $u(T^*, x,y)$ is continuous but is not piecewise smooth
(even not uniformly Lipschitz continuous on two sides of $\G$). More precisely, the strength of the
constructed shock solution $u$ is zero on $\G$ and then gradually increases for $t>T^*$ (see \eqref{e0}-\eqref{e1}
of Theorem 1.1).
\end{remark}

\begin{remark}
Although the global existence of $BV$ solution $u$ of \eqref{2.1} has been early obtained
(see \cite{Hor} or \cite{Smo}), from the viewpoint of understanding
the physical process of the appearance of singularities, it is also  an interesting
problem to give a clear picture on the generation of singularities from a blowup
curve, in particular, that of the singularity of the shock type.
\end{remark}

\begin{remark}
When $u_0(x,y)\in L^{\infty}$, under the entropy condition
\beq\label{H-0}
\p_t\Phi(u)+\p_x F_1(u)+\p_y G_1(u)\le 0\quad \text{in the sense of distribution},
\eeq
where $\Phi$ is any $C^1$ convex function, $F_1(u)=\Phi(u)F'(u)$ and  $G_1(u)=\Phi(u)G'(u)$,
the global existence of a unique weak solution $u$ of \eqref{2.1} has been proved (see Theorem 3.4.3 of \cite{Hor}).
Next we illustrate that \eqref{H-0} means the geometric entropy condition \eqref{entropy condition}
for our shock formation problem.
In fact, by the statements in Pages 44 of \cite{Hor}, near the blowup curve $\G$,
the entropy condition \eqref{H-0} can be described as follows:
Let $\nu=(\nu_0,\nu_1,\nu_2)$ be the normal vector of $\Sigma$, then the function
\beq\label{EC}
[u_-,u_+]\ni \th\mapsto {\rm sgn}(u_+-u_-)\left(F(\th)\nu_1+G(\th)\nu_2\right)
\eeq
lies above the linear interpolation between its values at $u_\pm$.
Without loss of generality, we assume $u_-<u_+$. Then one knows that $\Psi(\th)\triangleq F(\th)\nu_1+G(\th)\nu_2$
is concave in $[u_-,u_+]$. This yields that $\Psi'(\th)=f(\th)\nu_1+g(\th)\nu_2$ is decreasing in $[u_-,u_+]$. Therefore $f(u_+)\nu_1+g(u_+)\nu_2<f(u_-)\nu_1+g(u_-)\nu_2$.
Due to $\nu=(-\p_t\vp,1,-\p_y\vp)$, then we arrive at
\beq\label{EC1}
f(u_+)-\p_y\vp g(u_+)<f(u_-)-\p_y\vp g(u_-).
\eeq
In addition, it follows from \eqref{R-H} that
\beq\label{RH1}
\p_t\vp=\f{\Psi(u_+)-\Psi(u_-)}{u_+-u_-},
\eeq
which derives that there exists a $\th_0\in(u_-,u_+)$ such that $\p_t\vp=\Psi'(\th_0)$.
Note that $\Psi'(\th)$ is decreasing in $[u_-,u_+]$. Then
\beq\label{EC2}
f(u_+)-\p_y\vp g(u_+)<\p_t\vp<f(u_-)-\p_y\vp g(u_-),
\eeq
which is equivalent to \eqref{entropy condition}.
\end{remark}

\begin{remark}
Generally speaking, the descriptions on the singularities of $u$ in \eqref{e0}-\eqref{eT} are optimal. Indeed,
if we consider the following problem
\beq\label{H-1}
\left\{
  \begin{array}{l}
    \p_t v+\p_x (\f{v^2}{2})+\p_y (\f{v^2}{2})=0,  \\
    v(0,x,y)=-x+x^3,
  \end{array}
\right.
\eeq
then as in Sec.10 of \cite{Le94} or Remark 3.1 of \cite{Y-Z}, the regularity
of $\p_x v=O({((t-T^*)^3+(x-x^*)^2)}^{-\f13})$ with $T^*=1$ and $x^*=0$ is optimal.
\end{remark}

Now we briefly mention some interesting  works on the shock formation and construction for the hyperbolic conservation
laws. Under the generic nondegenerate condition of the initial data, for the 1-D scalar conservation law or 1-D $2\times 2$ $p-$
system of
polytropic gases, the authors in \cite{Kong}-\cite{Le94} and \cite{Chen-Dong} obtain the formation and  construction
of a shock wave starting from the blowup point under some variant assumptions; for the 1-D $3\times 3$
strictly hyperbolic conservation laws with the small initial data or the 3-D full compressible Euler equations with
symmetric structure and small perturbation,
the authors in \cite{Chen-Xin-Yin}, \cite{Yin1} and \cite{C-L} also get the formation and  construction
of the resulting shock waves, respectively. From these works, we know that the formation of a shock is caused
by the squeeze of characteristics. On the other hand, in recent years,
the study on the blowup and shock formation of smooth solutions to the
multidimensional hyperbolic conservation laws or the second order potential equations of polytropic
gases have made much progress (see \cite{B-1}-\cite{BSV-3},
\cite{C1}-\cite{0-Speck}, \cite{LS},\cite{Merle}, \cite{MY} and \cite{S2}), which illustrate that the
formation of the multidimensional shock is
due to the compression of the characteristic surfaces. However, the related constructions of multidimensional shock wave
after the blowup of smooth solutions
are not obtained. In the present paper, we are concerned with the construction of a multidimensional shock wave for the
scalar conservation law under the generic nondegenerate condition of the initial data.

In order to prove Theorem 1.1, our focus is to solve the singular and nonlinear first order
partial differential equation \eqref{R-H} of $\vp(t,y)$. The equation \eqref{R-H} is actually equivalent to
$\p_t\vp+h_1(t,y,\vp)\p_y\vp=h_2(t,y,\vp)$, where $h_1(t,y,\vp)=\int_0^1g(\th u_+(t,\vp,y)+(1-\th)u_-(t,\vp,y))d\th$
and $h_2(t,y,\vp)=\int_0^1f(\th u_+(t,\vp,y)+(1-\th)u_-(t,\vp,y))d\th$. Note that
the functions $h_i(t,y,\vp)$ ($i=1,2$) are not Lipschtzian with respect to the variables $(t,y)$ and the unknown function
$\vp$ since the first order derivatives of
$\na_{t,x,y} u_{\pm}(t,x,y)$ admit the strong singularities (see \eqref{e0} and  \eqref{e1}).
To get the uniqueness and
regularities of $(\vp(t,y), u_{\pm}(t,x,y))$, we require to carefully analyze the behaviors
of solution $u$ near the blowup point $(1,0,0)$ and the blowup curve $\G$. By the generic nondegenerate condition (GNC),
at first, we determine the equation and properties of $\G$, meanwhile, a good directional derivative $\p_t+\left(\phi^*
+\psi^*\f{\p_\eta\phi^*}{\p_\xi\phi^*}\right)\p_x$ in \eqref{eT} is found. Subsequently, by careful computation,
the asymptotic behaviors of solution $u$ around $\G$ are
derived and then the existence and regularity of $\vp(t,y)$ are also established. From the result in Theorem 1.1,
we have known a basic fact for problem \eqref{2.1}:  due to the squeeze of characteristic surfaces around the blowup curve,
the shock really appears and develops for $t\ge T^*$ when the initial data satisfy the generic nondegenerate condition.

Our paper is organized as follows. In Section 2, we give some key analysis on the characteristic surface and determine the
blowup curve $\G$ of equation \eqref{2.1} near the blowup point $(1,0,0)$, then complete the construction of the shock surface
$\Sigma$. In Section 3, the behaviors of solution $u$ to problem  \eqref{2.1} around $\G$ are given in details
and then Theorem 1.1 is proved.

\section{Construction of the shock surface $\Sigma$}
From \eqref{2.9} and  by the implicit function theorem, it follows from the second
equation $y=\eta+t\psi(\xi,\eta)$ of \eqref{2.5} that
\beq\label{2.10}
\eta=Y(t,\xi,y)\in C^4,
\eeq
where $t\in[0,\f{4}{3})$, $\xi\in(-\dl,\dl)$ and $y\in(-\dl,\dl)$ for  sufficiently small $\dl>0$.
Meanwhile, it is easy to check that
\beq\label{2.11}
\left(\p_t Y,\p_\xi Y,\p_y Y\right)
=-\f{1}{1+t\p_\eta\psi}\left(\psi,t\p_\xi\psi,-1\right),
\eeq
where $1+t\p_\eta\psi\ge\f{1}{8}$ for $t\in[0,\f{7}{6}]$.
Note that all the derivatives of $Y$ in \eqref{2.11} are uniformly bounded and $\p_tY=O(\dl)$ holds
when $t\in[0,\f{7}{6}]$, $\xi\in(-\dl,\dl)$ and $y\in(-\dl,\dl)$.

At first, we study the property of  the  surface $\Si_0$ generated by
\beq\label{H-5}
D(t,\xi,Y(t,\xi,y))\equiv 1+tH(\xi,Y(t,\xi,y))=0,
\eeq
where $t\ge T^*=1$. Here we point out that the variable $\xi$ in \eqref{H-5} can be actually
expressed a function of $(t,x,y)$ from \eqref{2.5}
when $(t,x,y)\in \Sigma_0$, but $\xi$ has two different expressions for the left part and
the right part of $\Sigma_0$ (see Figure 4).

\begin{lemma}\label{lemma 1.1}
The surface $\Si_0$ is of cusp type, whose edge $\G$ (blowup curve) is space-like.
\end{lemma}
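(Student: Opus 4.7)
The plan is to first reformulate $\Sigma_0$ as the image of the smooth surface $\widetilde D(t,\xi,y) := 1 + tH(\xi, Y(t,\xi,y)) = 0$ in $(t,\xi,y)$-space under the projection $\pi:(t,\xi,y)\mapsto(t, X(t,\xi,y), y)$, where $X(t,\xi,y) := \xi + t\phi(\xi, Y(t,\xi,y))$. A direct chain-rule computation using $\p_\xi Y = -t\p_\xi\psi/(1+t\p_\eta\psi)$ from \eqref{2.11}, combined with the identity $\p_\xi\phi\cdot\p_\eta\psi = \p_\eta\phi\cdot\p_\xi\psi$ (because $\phi$ and $\psi$ are both functions of the single scalar $u_0$), yields
\[
\p_\xi X = \f{\widetilde D(t,\xi,y)}{1 + t\p_\eta\psi(\xi, Y(t,\xi,y))}.
\]
Since $1+t\p_\eta\psi\ge \f{1}{8}$ by \eqref{2.11}, $\pi$ is critical on all of $\{\widetilde D=0\}$, and the higher-order cusp edge $\G$ will be the locus where two preimage branches merge, namely $\G=\{\widetilde D=\p_\xi\widetilde D=0\}$.

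Next I would apply the implicit function theorem to extract $\G$. At $(t,\xi,y)=(1,0,0)$, conditions \eqref{Y-0}-\eqref{2.7} give $\widetilde D=0$ and $\p_\xi\widetilde D=0$, while $\p_\eta H(0,0)=0$ and $\psi(0,0)=0$ force $\p_t Y(1,0,0)=0$, so $\p_t\widetilde D(1,0,0)=H(0,0)=-1$. The crucial computation is
\[
\p_\xi^2\widetilde D(1,0,0) = \p_\xi^2 H(0,0) + 2\p^2_{\xi\eta}H(0,0)\cdot c + \p_\eta^2 H(0,0)\cdot c^2 = 6(1+c^2) > 0,
\]
where $c := \p_\xi Y(1,0,0) = -\p_\xi\psi(0,0)/(1+\p_\eta\psi(0,0))$ and I have used the isotropic Hessian in \eqref{2.7} together with $\p^2_{\xi\eta}H(0,0)=0$, while the residual term $\p_\eta H(0,0)\,\p_\xi^2 Y(1,0,0)$ vanishes because $\p_\eta H(0,0)=0$. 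Since $\p_\xi\widetilde D(1,0,0)=0$, the Jacobian of $(\widetilde D,\p_\xi\widetilde D)$ in $(t,\xi)$ at $(1,0,0)$ has determinant $\p_t\widetilde D\cdot\p_\xi^2\widetilde D = -6(1+c^2)\ne 0$, so the IFT furnishes $C^2$ functions $T^*(y)$, $\xi(y)$ with $T^*(0)=1$, $\xi(0)=0$; I then define $\G = \{(T^*(y), x^*(y), y)\}$ with $x^*(y):= X(T^*(y), \xi(y), y)$.

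For the cusp-type claim, I would Taylor-expand $\widetilde D$ and $X$ about $(T^*(y),\xi(y))$ in each fixed $y$-slice. Solving $\widetilde D=0$ yields two branches $\xi_\pm(t,y) - \xi(y) \sim \pm\sqrt{2|\p_t\widetilde D|(t-T^*(y))/\p_\xi^2\widetilde D}$ for $t>T^*(y)$. From $\p_\xi X = \widetilde D/(1+t\p_\eta\psi)$ I can read off $\p_\xi X = \p_\xi^2 X = 0$ on $\G$ while $\p_\xi^3 X|_\G = \p_\xi^2\widetilde D/(1+T^*\p_\eta\psi) > 0$; substituting the branch expansion into the Taylor series of $X$ and subtracting the linear drift $\si(y)(t-T^*(y))$ with $\si(y):=\p_t X|_\G$, I obtain the standard cusp profile $(x_\pm - x^*(y) - \si(y)(t-T^*(y)))^2 \propto (t-T^*(y))^3$ in each $y$-slice, which is the cusp-type description.

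Finally, for space-likeness of $\G$, I would differentiate $\widetilde D(T^*(y),\xi(y),y)=0$ in $y$ at $y=0$: since $\p_\xi\widetilde D|_\G=0$ and $\p_y\widetilde D(1,0,0) = \p_\eta H(0,0)\,\p_y Y = 0$, this forces $T^{*\prime}(0)=0$, so the tangent to $\G$ at $(1,0,0)$ equals $(0, \p_\eta\phi(0,0)/(1+\p_\eta\psi(0,0)), 1)$, which is linearly independent of the unique characteristic direction $(1, \phi(0,0), \psi(0,0))=(1,0,0)$ at that point; continuity then propagates the space-like property to a full $y$-neighborhood. I expect the main obstacle to be precisely the computation $\p_\xi^2\widetilde D(1,0,0) = 6(1+c^2)$: the chain rule must be run through the nested composite $H(\xi,Y(t,\xi,y))$ so that the three ingredients of (GNC)---isotropic Hessian, vanishing mixed partial, and minimality at $(0,0)$---combined with $\psi(0,0)=0$ conspire to eliminate all cross terms and leave a manifestly positive expression, on which the entire IFT setup and the cusp expansion depend.
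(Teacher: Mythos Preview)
Your proposal is correct and follows essentially the same route as the paper: both apply the implicit function theorem to the pair $(\widetilde D,\p_\xi\widetilde D)$ to extract $\G$, solve $\widetilde D=0$ for the two $\xi$-branches, and substitute into $X$ to exhibit the cusp; your identity $\p_\xi X=\widetilde D/(1+t\p_\eta\psi)$ is exactly the paper's \eqref{2.27}, used there to show $\p_\xi^2 X|_\G=0$ and to compute $\p_\xi^3 X|_\G$.

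One point you underweight: after substituting $\xi_\pm-\xi(y)\sim\pm\sqrt{C(t-T^*)}$ into the Taylor expansion of $X$, the $(t-T^*)^{3/2}$ coefficient receives contributions from \emph{both} $\p_\xi^3 X|_\G$ and the mixed term $\p_t\p_\xi X|_\G=\p_t\widetilde D/(1+T^*\p_\eta\psi)$, and the cusp is nondegenerate only if their sum is nonzero. Checking $\p_\xi^3 X|_\G>0$ alone does not suffice; this combined coefficient is precisely the paper's $b^*(y)$ in \eqref{2.26}--\eqref{2.29-4}, whose positivity is the actual core computation. At $(1,0,0)$ one finds the two pieces add constructively (both carry the sign of $-1/\p_\xi\phi(0,0)$), so the argument goes through, but you should flag this rather than the IFT Jacobian as the ``main obstacle''.
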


\begin{proof}
For each fixed $y\in(-\dl,\dl)$ and the function $Y(t,\xi,y)$ defined in \eqref{2.10}, let
\beq\label{2.12}
D(t,\xi,Y(t,\xi,y))=0.
\eeq
Then one has
\beq\label{2.13}
\p_t\left(D(t,\xi,Y(t,\xi,y))\right)=H(\xi,Y(t,\xi,y))-t \p_\eta H(\xi,Y(t,\xi,y))\f{\psi}{1+t\p_\eta\psi}.
\eeq
This together with \eqref{2.9} yields that for $t\in[0,\f{7}{6}]$ and $\xi,\ y\in(-\dl,\dl)$,
\beq\label{2.14}
\p_t\left(D(t,\xi,Y(t,\xi,y))\right)\leq
-\f{3}{4}+\f{7}{6}\cdot O(\dl)<0.
\eeq
Hence, for $\xi\in(-\dl,\dl)$ and $y\in(-\dl,\dl)$,
there exists a unique function $t=t(\xi,y)\in C^1$ (the equation of $\Sigma_0$) satisfying $D(t(\xi,y),\xi,y)\equiv0$
by the implicit function theorem.

\begin{figure}[h]
\centering
\includegraphics[scale=0.4]{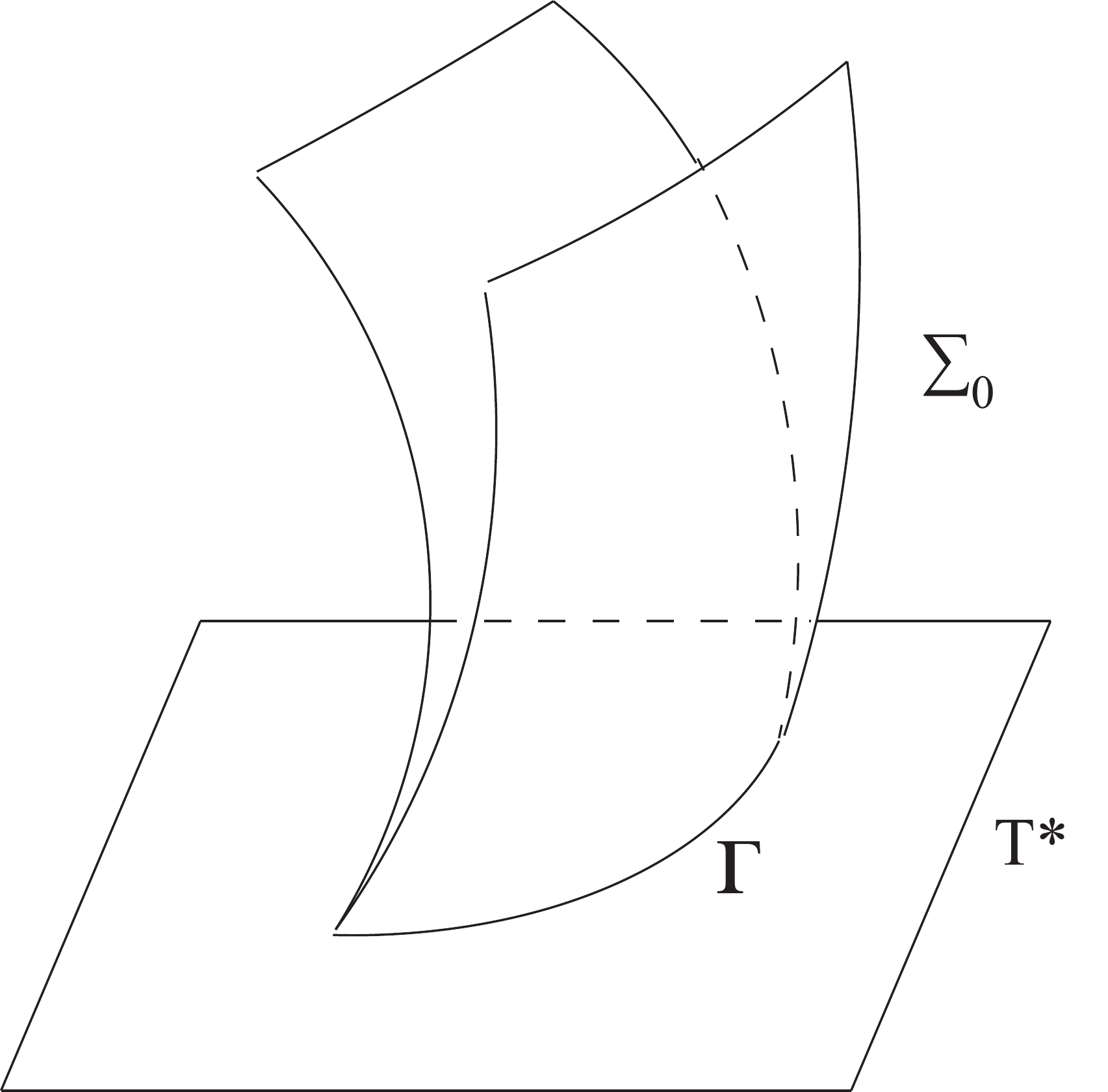}
\end{figure}
\centerline{\bf Figure 4. Surface $\Si_0$ and its edge $\G$.}

\vskip 0.4 true cm

Next we discuss the blowup time $T^*(y)=\min_{\xi} t(\xi,y)$ of solution $u$ to \eqref{2.1}
for fixed $y\in(-\dl,\dl)$. Note that
\beq\label{2.15}
\p_\xi t H(\xi,Y)+t \left(\p_\xi H+\p_\eta H\cdot(\p_\xi Y+\p_t Y\p_\xi t)\right)=0.
\eeq
If taking $t_\xi=0$ in \eqref{2.15}, we then obtain that the related $\xi$ should satisfy at $t=T^*(y)$,
\beq\label{2.16}
D'(t,\xi,Y(t,\xi,y))\triangleq \p_\xi H(\xi,Y(t,\xi,y))+\p_\eta H(\xi,Y(t,\xi,y))\cdot \p_\xi Y(t,\xi,y)=0.
\eeq
By \eqref{2.7}, \eqref{2.9} and direct computation, the following Jacobian determinant holds
that for $\xi,\ y\in(-\dl,\dl)$ and small $\dl>0$,
\begin{align*}
&\left|
        \begin{array}{cc}
          \p_t(D(t,\xi,Y(t,\xi,y))) & \p_\xi(D(t,\xi,Y(t,\xi,y))) \\
         \p_t(D'(t,\xi,Y(t,\xi,y))) & \p_\xi(D'(t,\xi,Y(t,\xi,y))) \\
        \end{array}
\right|\no\\
&=\left|
        \begin{array}{cc}
          H+t \p_\eta H \p_t Y & t D'(t,\xi,Y(t,\xi,\eta)) \\
         \p^2_{\xi\eta} H\p_t Y+\p^2_\eta H\p_t Y \p_\xi Y+\p_{\eta} H\p^2_{t\xi} Y &
         \p^2_\xi H+2\p^2_{\xi\eta} H\p_\xi Y+\p^2_\eta H
          (\p_\xi Y)^2+\p_{\eta} H\p^2_\xi Y\\
        \end{array}
\right|\no\\
&=\left(H+t \p_\eta H \p_t Y\right)\left(\p^2_\xi H+2\p^2_{\xi\eta} H\p_\xi Y+\p^2_\eta H
(\p_\xi Y)^2+\p_{\eta} H\p^2_\xi Y\right)\no\\
\end{align*}

\ben\label{2.17}
&=&H\left(\p^2_\xi H+\p^2_\eta H\left(\p_\xi Y\right)^2\right)+O(\dl)\no\\
&=&-6-6\th_0+O(\dl)<0,
\een
where $\th_0=\left(\f{\p_\xi \psi(0,0)}{\p_\xi \phi(0,0)}\right)^2$.
Thus by the implicit function theorem, we obtain the solution $(\xi,\eta)=(\Xi^*(y),Y^*(y))$
from equations \eqref{2.12} and \eqref{2.16}.
Furthermore,  we can get the blowup curve $\G$ with the parameter $y$ as follows
\beq\label{2.18-0}
\ t=T^*(y)=-\f{1}{H(\Xi^*(y),Y^*(y))},\ x=\Xi^*(y)+T^*(y)\phi(\Xi^*(y),Y^*(y)),
\eeq
which comes from the first equation of the characteristics \eqref{2.5} starting from  the initial
point $(t,\xi,\eta)=(0,\Xi^*(y),Y^*(y))$.

Below we study  the properties of surface $\Si_0$.
For each $y\in(-\dl,\dl)$, we have the Taylor's expansion
of $H(\xi,Y(t,\xi,y))$ at the point $(t,\xi)=(T^*(y),\Xi^*(y))$:
\ben\label{2.18}
&&H(\xi,Y(t,\xi,y))\no\\
&&=H^*+\p_\eta H^*\p_t Y^*(t-T^*(y))+\f{1}{2}\left[\p^2_\xi H^*+2\p^2_{\xi\eta}H^*\p_\xi Y^*+\p^2_\eta H^*
(\p_\xi Y^*)^2+\p_{\eta} H^*\p^2_\xi Y^*\right](\xi-\Xi^*(y))^2\no\\ 
&&\quad +\left[\p^2_{\xi\eta} H^*\p_t Y^*+\p^2_\eta H^*\p_\xi Y^*\p_t Y^*+\p_{\eta} H^*\p^2_{t\xi}Y^*\right]
(t-T^*(y))(\xi-\Xi^*(y))\no\\
&&\quad +a^*(y)(\xi-\Xi^*(y))^3+O((t-T^*(y))^2+(t-T^*(y))|\xi-\Xi^*(y)|^2+|\xi-\Xi^*(y)|^4),
\een
where
\beq
H^*=H(\xi,Y(t,\xi,y))|_{(t,\xi)=(T^*(y),\Xi^*(y))},\ \p_\eta H^*=(\p_\eta H)(\xi,Y(t,\xi,y))|_{(t,\xi)=(T^*(y),\Xi^*(y))},\ ...,
\eeq
and
\ben
a^*(y)&=&\f{1}{6}\left[\p^3_\xi H^*+3\p^2_\xi\p_\eta H^*\p_\xi Y^*
+3\p_\xi\p^2_\eta H^*(\p_\xi Y^*)^2+2\p^2_{\xi\eta} H^*\p_\xi^2 Y^*
+\p^3_\eta H^*(\p_\xi Y^*)^3\right.\no\\
&&\left.+3\p^2_\eta H^*\p_\xi Y^* \p^2_\xi Y^*
+\p_\eta H^* \p^3_\xi Y^*\right].
\een
From now on, the notation $F^*=F|_{(t,\xi,\eta)=(T^*(y), \Xi^*(y),Y^*(y))}$
stands for the value of function $F$
at the point $(t,\xi,\eta)=(T^*(y), \Xi^*(y),Y^*(y))$.
In addition,  by $T^*(y)=-\f{1}{H^*}$, we have
\ben\label{2.19}
&&D(t,\xi,Y(t,\xi,y))\no\\
&&=1+T^*(y)H(\xi,Y(t,\xi,y))+(t-T^*(y))H(\xi,Y(t,\xi,y))\no\\
&&=-D_0(y)(t-T^*(y))+D_1(y)(\xi-\Xi^*(y))^2+D_2(y)(t-T^*(y))(\xi-\Xi^*(y))+D_3(y)(\xi-\Xi^*(y))^3\no\\
&&\quad +O((t-T^*(y))(\xi-\Xi^*(y))^2+(t-T^*(y))^2+|\xi-\Xi^*(y)|^3),
\een
where
\ben
\label{2.20.1}D_0(y)&=&-\left(H^*+T^*(y)\p_\eta H^* \p_t Y^*\right),\no\\
\label{2.20.2}D_1(y)&=&\f{T^*(y)}{2}\left[\p^2_\xi H^*+2\p^2_{\xi\eta}H^*\p_\xi Y^*+\p^2_\eta H^*
(\p_\xi Y^*)^2+\p_{\eta} H^*\p^2_\xi Y^*\right],\no\\
\label{2.20.3}D_2(y)&=&T^*(y)\left[\p^2_{\xi\eta} H^*\p_t Y^*+\p^2_\eta H^*\p_\xi Y^*\p_t  Y^*
+\p_{\eta} H^*\p^2_{t\xi} Y^*\right],\no\\
\label{2.20.4}D_3(y)&=&a^*(y)T^*(y).
\een
Note that $H^*=-1+o(\dl)$, $T^*(y)=-\f{1}{H^*}=1+O(\dl)$, $\p^2_\xi H^*=6+O(\dl)$,
$\p^2_\eta H^*=6+O(\dl)$ and $\p_\eta H^*, \p_t Y^*, \p^2_{\xi\eta} H^*=O(\dl)$ for $y\in(-\dl,\dl)$ and $\dl>0$
sufficiently small.

To obtain the expansion of $\xi-\Xi^*(y)$ in \eqref{2.19},
we consider the equation
\beq\label{2.21}
h(s,x)\triangleq-s+ax^2+bsx+cx^3+O(s^2+s|x|^2+x^4)=0,
\eeq
where $a>0$, $b,c\in{\mathbb R}$ and $h\in C^4$. Taking $\o=s^\f{1}{2}$ and $\si=\f{x}{\o}$, \eqref{2.21} is equivalent to
\beq
\tilde{h}(\o,\si)\triangleq\f{h(\o^2,\o\si)}{\o^2}=-1+a\si^2+b\o\si+c\o\si^3+O(\o^2)=0,
\eeq
where $\tilde h(\o,\si)\in C^2$. Since $\tilde h(0,\pm\f{1}{\sqrt{a}})=0$ and $\f{\p\tilde h}{\p\si}=\pm2\sqrt{a}\neq0$,
thus by implicit function theorem we obtain
\beq
\si_\pm(\o)=\pm\f{1}{\sqrt a} -\f{ab+c}{2a^2}\o+O(\o^2)\in C^2
\eeq
satisfying $\tilde h(\o,\si_\pm(\o))\equiv 0$ for $\o$ near $0$. Then \eqref{2.21}
has the two real roots $x_\pm(s)\in C^{1/2}$ for $s\in[0,\ve)$ satisfying
\beq\label{2.22}
x_\pm(s)=\pm\f{1}{\sqrt a} s^\f{1}{2}-\f{ab+c}{2a^2}s+O(s^\f{3}{2}).
\eeq

This yields that for $(t,\xi,y)\in\Si_0$,
\beq\label{2.23}
\xi=\Xi_\pm(t,y)=\Xi^*(y)\pm A_1(y)(t-T^*(y))^{\f{1}{2}}+A_2(y)(t-T^*(y))+O((t-T^*(y))^\f{3}{2}),
\eeq
where
\ben
\label{2.24.1}A_1(y)&=&\sqrt{\f{D_0(y)}{D_1(y)}}=\f{1}{\sqrt{3+3\th_0}}+O(\dl),\no\\
\label{2.24.1}A_2(y)&=&-\f{D_1(y)D_2(y)+D_0(y)D_3(y)}{2\left(D_1(y)\right)^2}.
\een
In addition, by  recalling $x^*(y)=\Xi^*(y)+T^*(y)\phi^*$ and $T^*(y)\left(\p_\xi\phi^*+\p_\eta\phi^*\p_\xi  Y^*\right)=-1$,
for $(t,\xi,y)\in\Si_0$ with $t\ge T^*(y)$ and $y\in(-\dl,\dl)$, we can arrive at
\ben\label{2.25}
&&x_\pm(t,y)\no\\
&&\equiv \Xi_\pm(t,y)+T^*(y)\phi(\Xi_\pm(t,y),Y(t,\Xi_\pm(t,y),y))
+(t-T^*(y))\phi(\Xi_\pm(t,y),Y(t,\Xi_\pm(t,y),y))\no\\
&&=\Xi_\pm(t,y)+T^*(y)\phi^*+T^*(y)\p_\eta \phi^*\p_t Y^*(t-T^*(y))
+T^*(y)\left(\p_\xi\phi^*+\p_\eta \phi^* \p_\xi Y^*\right)(\Xi_\pm(t,y)-\Xi^*(y))\no\\
&&\quad +\f{T^*(y)}{2}
\left.\left[\p^2_\xi\left(\phi(\xi,Y(t,\xi,y))\right)\right]
\right|_{t=T^*(y),\xi=\Xi^*(y)}(\Xi_\pm(t,y)-\Xi^*(y))^2\no\\
&&\quad +T^*(y)
\left.\left[\p^2_{t\xi}\left(\phi(\xi,Y(t,\xi,y))\right)\right]
\right|_{t=T^*(y),\xi=\Xi^*(y)}
(t-T^*(y))(\Xi_\pm(t,y)-\Xi^*(y))\no\\
&&\quad +\f{T^*(y)}{6}\left.\left[\p^3_{\xi}\left(\phi(\xi,Y(t,\xi,y))\right)\right]
\right|_{t=T^*(y),\xi=\Xi^*(y)}(\Xi_\pm(t,y)-\Xi^*(y))^3\no\\
&&\quad +(t-T^*(y))\phi^*+(t-T^*(y))(\p_\xi \phi^*+\p_\eta \phi^* \p_\xi Y^*)(\Xi_\pm(t,y)-\Xi^*(y))\no\\
&&\quad +O((t-T^*(y))^2+(t-T^*(y))(\Xi_\pm(t,y)-\Xi^*(y))^2+|\Xi_\pm(t,y)-\Xi^*(y)|^4)\no\\
&&=x^*(y)+(\phi^*+\psi^*\f{\p_\eta\phi^*}{\p_\xi\phi^*})(t-T^*(y))
\mp b^*(y)(t-T^*(y))^\f{3}{2}+O((t-T^*(y))^2),
\een
where
\ben\label{2.26}
b^*(y)&=&- A_1(y)T^*(y)\left.\left[\p^2_{t\xi}\left(\phi(\xi,Y(t,\xi,y))\right)\right]
\right|_{t=T^*(y),\xi=\Xi^*(y)}-A_1(y)H^*\no\\
&&\quad -\f{T^*(y)}{6}A_1^3(y)\left.\left[\p^3_{\xi}\left(\phi(\xi,Y(t,\xi,y))\right)\right]
\right|_{t=T^*(y),\xi=\Xi^*(y)}.
\een
Note that
\beq\label{2.27}
\p_\xi\left(\phi(\xi,Y(t,\xi,y))\right)=
\f{\p_\xi\phi}{1+t\p_\eta\psi}=\f{1}{t}-\f{1+t H}{t(1+t\p_\eta\psi)}
\eeq
and
\beq\label{2.28}
\p^2_\xi\left(\phi(\xi,Y(t,\xi,y))\right)
=\f{1+t H}{(1+t\p_\eta \psi)^2}\p_\xi(\p_\eta \psi(\xi,Y(t,\xi,y)))
-\f{\p_\xi H+\p_\eta H \p_\xi Y}{1+t\p_\eta \psi}.
\eeq
Due to $1+t H\equiv0$ and $\p_\xi H+\p_\eta H \p_\xi Y\equiv0$ for $(t,\xi,y)\in \G$,
we then have that from \eqref{2.28},
\beq\label{2.29}
\left.\left[\p^2_\xi\left(\phi(\xi,Y(t,\xi,y))\right)\right]\right|_{t=T^*(y),\xi=\Xi^*(y)}\equiv0.
\eeq
On the other hand,
\ben\label{2.29-1}
&&\p^3_\xi\left(\phi(\xi,Y(t,\xi,y))\right)\no\\
&=&(1+t H)\p_\xi
\left[\f{1}{(1+t\p_\eta \psi)^2}\p_\xi(\p_\eta \psi(\xi,Y(t,\xi,y)))\right]
+\f{t(\p_\xi H+\p_\eta H \p_\xi Y)}{(1+t\p_\eta \psi)^2}\p_\xi(\p_\eta \psi(\xi,Y(t,\xi,y)))\no\\
&&-(\p_\xi H+\p_\eta H \p_\xi Y)\p_\xi\left(\f{1}{1+t\p_\eta \psi}\right)
-\f{\p^2_\xi H+2\p^2_{\xi\eta} H \p_\xi Y+\p^2_\eta H\left(\p_\xi Y\right)^2
+\p_\eta H\p^2_\xi Y}{1+t\p_\eta \psi},
\een
which derives
\beq\label{2.29-2}
\left.\left[\p^3_\xi\left(\phi(\xi,Y(t,\xi,y))\right)\right]
\right|_{t=T^*(y),\xi=\Xi^*(y)}=-\f{6+6\th_0}{\p_\xi \phi(0,0)}+O(\dl).
\eeq

In addition,
\ben
\label{2.29-3}&&\left.\left[\p^2_{t\xi}\left(\phi(\xi,Y(t,\xi,y))\right)\right]\right|_{t=T^*,\xi=\Xi^*}\no\\
&=&\p^2_{\xi\eta}\phi^* \p_t Y^*
+\p^2_\eta \phi^* \p_t Y^*\p_\xi Y^*+\p_\eta\phi^* \p^2_{t\xi}Y^*\no\\
&=&-\f{\p_\eta \phi^*\p_\xi \psi^*}{(1+T^*\p_\eta \psi^*)^2}+O(\dl)
=-\f{\p_\xi \phi^*\p_\eta \psi^*}{(-T^*\p_\xi \phi^*)^2}+O(\dl)\no\\
&=&-\f{\p_\eta \psi(0,0)}{\p_\xi \phi(0,0)}+O(\dl).
\een
Therefore, we have that for small  $\dl>0$,
\beq\label{2.29-4}
b^*(y)=-\f{2}{3\p_\xi \phi(0,0)\sqrt{3+3\th_0}}+O(\dl)>0.
\eeq
This means that $\Si_0$ is of a cusp-typed surface. Additionally, the edge $\G$ of $\Si_0$
is space-like in terms of \eqref{2.18-0}.
\end{proof}


\begin{remark}
We now explain the geometric meaning of the coefficient $\phi^*+\psi^*\f{\p_\eta \phi^*}{\p_\xi \phi^*}$
of $t-T^*(y)$ in \eqref{e0}-\eqref{eT}. Note that the surface
$\Si_0$ is determined by the parametric equations of $(\xi,\eta)$
\beq\label{2.30}
\left\{\begin{array}{l}
         t=-\f{1}{H},\\
         x=\xi-\f{1}{H}\phi(\xi,\eta),\\
         y=\eta-\f{1}{H}\psi(\xi,\eta).
       \end{array}
\right.
\eeq
Then its normal vector is
\ben\label{2.31}
&&(\p_\xi t,\p_\xi x,\p_\xi y)\times(\p_\eta t,\p_\eta x,\p_\eta y)\\
&&=\f{1}{H^3}\left(\p_\xi H(\phi\p_\xi \phi+\psi\p_\eta \phi)
+\p_\eta H(\phi\p_\xi \psi+\psi\p_\eta \psi),
-(\p_\xi H\p_\xi \phi+\p_\eta H\p_\xi \psi),-(\p_\xi H\p_\eta \phi+\p_\eta H\p_\eta \psi)\right).\no
\een
Thus for fixed $y\in(-\dl,\dl)$, one has a tangent direct of $\Si_0$ as follows
\beq\label{2.32}
\f{dx}{dt}=\f{\p_\xi H(\phi\p_\xi \phi+\psi\p_\eta \phi)
+\p_\eta H(\phi\p_\xi \psi+\psi\p_\eta \psi)}{\p_\xi H\p_\xi \phi+\p_\eta H\p_\xi \psi}
=\phi+\f{\psi(\p_\xi H\p_\eta \phi+\p_\eta H\p_\eta \psi)}{\p_\xi H\p_\xi \phi+\p_\eta H\p_\xi \psi}.
 \eeq
Due to $\p_\xi \phi<0$, then
\beq\label{2.33}
\f{\p_\xi H\p_\eta \phi+\p_\eta H\p_\eta \psi}{\p_\xi H\p_\xi \phi+\p_\eta H\p_\xi \psi}
=\f{\p_\xi \phi(\p_\xi H\p_\eta \phi+\p_\eta H\p_\eta \psi)}{\p_\xi \phi(\p_\xi H\p_\xi \phi+\p_\eta H\p_\xi \psi)}
=\f{\p_\eta \phi(\p_\xi H\p_\xi \phi+\p_\eta H\p_\xi \psi)}
{\p_\xi \phi(\p_\xi H\p_\xi \phi+\p_\eta H\p_\xi \psi)}
=\f{\p_\eta \phi}{\p_\xi \phi}.
\eeq
Substituting this into \eqref{2.32} yields
\beq\label{2.34}
\f{dx}{dt}=\phi+\psi\f{\p_\eta \phi}{\p_\xi \phi},
\eeq
which means that $\phi^*+\psi^*\f{\p_\eta \phi^*}{\p_\xi \phi^*}$ is just the tangent direction of $\G$ at
the point $(T^*(y),x^*(y),y)$ for the fixed $y\in(-\dl,\dl)$.
\end{remark}

We rewrite the first equation of the characteristics \eqref{2.5} starting from the point $(T^*(y), \Xi^*(y), Y^*(y))$ as
\beq\label{2.36}
x-x^*=(\xi-\Xi^*(y))+t\phi(\xi,Y(t,\xi,y))-T^*(y)\phi(\Xi^*(y),Y^*(y)).
\eeq
For the fixed $y\in(-\dl,\dl)$ and $t>T^*(y)$, we introduce  the following transformation
\beq\label{2.37}
s=(t-T^*(y))^\f{1}{2},\ \mu=\f{\xi-\Xi^*(y)}{s},\
\la=\f{x-x^*(y)-(\phi^*+\psi^*\f{\p_\eta \phi^*}{\p_\xi \phi^*})(t-T^*(y))}{s^3}.
\eeq
\begin{lemma}\label{lemma 1.2}
For fixed $y\in(-\dl,\dl)$, $t>T^*(y)$, and small $\ve>0$, when $s\in[0,\ve)$ and $|\la|<\ve$,
there exist two real roots for the characteristics equation \eqref{2.5} with  the following expansions
\ben
\label{2.39}\xi_+(t,x,y)&=&\Xi^*(y)+s\left(\sqrt{\f{c_1(y)}{c_2(y)}}+\f{\la}{2c_1(y)}\right)+O(s^2+s\la^2),\\
\label{2.40}\xi_-(t,x,y)&=&\Xi^*(y)+s\left(-\sqrt{\f{c_1(y)}{c_2(y)}}+\f{\la}{2c_1(y)}\right)+O(s^2+s\la^2),
\een
where $c_1(y)$ and $c_2(y)$ are some suitable  positive functions. Furthermore, $s\to
s^\al\left(\xi_\pm-\Xi^*(y)\right)$ is of $C^{2+\al}$ for $\al=-1,0,1$.
\end{lemma}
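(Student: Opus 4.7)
The plan is to Taylor-expand the characteristic map \eqref{2.36} about the cusp tip $(T^*(y),\Xi^*(y))$, apply the parabolic-cubic rescaling \eqref{2.37}, and then extract the two branches $\xi_\pm$ via the implicit function theorem on a regular cubic equation in $\mu$. The key observation is that the same three cancellations already recorded in the proof of Lemma \ref{lemma 1.1} will collapse the Taylor expansion of the right-hand side of \eqref{2.36} to a purely cubic structure.

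First I would expand $t\phi(\xi,Y(t,\xi,y))-T^*(y)\phi^*$ about $(t,\xi)=(T^*(y),\Xi^*(y))$ up to third order, following exactly the computation culminating in \eqref{2.25}. Three cancellations clean up the expansion: (i) the coefficient of $\xi-\Xi^*(y)$ vanishes because $1+T^*(y)(\p_\xi\phi^*+\p_\eta\phi^*\p_\xi Y^*)=0$; (ii) the coefficient of $(\xi-\Xi^*(y))^2$ vanishes by \eqref{2.29}; (iii) the coefficient of $(t-T^*(y))$ equals $\phi^*+T^*(y)\p_\eta\phi^*\p_tY^*$, and using $\p_tY^*=-\psi^*/(1+T^*(y)\p_\eta\psi^*)$ together with the identity $-T^*(y)\p_\xi\phi^*=1+T^*(y)\p_\eta\psi^*$ (itself a consequence of $1+T^*(y)H^*=0$), this simplifies precisely to $\phi^*+\psi^*\f{\p_\eta\phi^*}{\p_\xi\phi^*}$, i.e. the shift appearing in the definition of $\lambda$ in \eqref{2.37}. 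After these cancellations, \eqref{2.36} takes the form
\begin{equation*}
x-x^*(y)-\Bigl(\phi^*+\psi^*\f{\p_\eta\phi^*}{\p_\xi\phi^*}\Bigr)(t-T^*(y))=-c_1(y)(t-T^*(y))(\xi-\Xi^*(y))+c_2(y)(\xi-\Xi^*(y))^3+R,
\end{equation*}
with $R=O((t-T^*)^2+(t-T^*)(\xi-\Xi^*)^2+|\xi-\Xi^*|^4)$, where $c_1(y)=-T^*(y)[\p^2_{t\xi}(\phi(\xi,Y(t,\xi,y)))]^*$ and $c_2(y)=\f{T^*(y)}{6}[\p^3_\xi(\phi(\xi,Y(t,\xi,y)))]^*$; the sign computations \eqref{2.29-2} and \eqref{2.29-3} combined with $\p_\xi\phi(0,0)<0$ and $\p_\eta\psi(0,0)<0$ then show $c_1(y),c_2(y)>0$ for small $\dl>0$.

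Second, substituting $s=(t-T^*(y))^{1/2}$, $\xi-\Xi^*(y)=s\mu$, and dividing by $s^3$ converts the equation into
\begin{equation*}
\lambda=-c_1(y)\mu+c_2(y)\mu^3+s\,\widetilde R(s,\mu,\lambda,y),
\end{equation*}
where $\widetilde R$ inherits regularity from $\phi,Y\in C^4$. At $(s,\lambda)=(0,0)$ this reduces to $\mu(c_2\mu^2-c_1)=0$; focussing on the two nontrivial roots $\mu_\pm^{(0)}=\pm\sqrt{c_1/c_2}$, the $\mu$-derivative at either root equals $3c_2(\mu_\pm^{(0)})^2-c_1=2c_1\ne0$, so the implicit function theorem yields two local branches $\mu_\pm(s,\lambda,y)$ through $\mu_\pm^{(0)}$. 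A first-order Taylor expansion about $(s,\lambda)=(0,0)$, using $\p_\lambda\mu_\pm|_{0}=1/(2c_1)$ and $\p_s\mu_\pm|_{0}=O(1)$, yields \eqref{2.39}-\eqref{2.40} after setting $\xi_\pm=\Xi^*(y)+s\mu_\pm$.

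The regularity assertion amounts to tracking surviving $s$-derivatives after the rescaling: since $\widetilde R$ is at least $C^1$ in $s$ (Taylor's theorem with integral remainder applied to $\phi(\xi,Y(t,\xi,y))\in C^4$), the IFT gives $\mu_\pm\in C^1$, whence $s^{-1}(\xi_\pm-\Xi^*)=\mu_\pm\in C^1$; each extra factor of $s$ absorbs one derivative worth of regularity from $\widetilde R$, giving $(\xi_\pm-\Xi^*)=s\mu_\pm\in C^2$ and $s(\xi_\pm-\Xi^*)=s^2\mu_\pm\in C^3$. The main technical obstacle is the bookkeeping of the regularity of $\widetilde R$ after the rescaling: a naive division by $s^3$ loses smoothness, so one must instead split the Taylor expansion into the explicit cubic part plus a smooth $C^1$ remainder in the rescaled variables, which is where the assumption $u_0\in C^4$ is used sharply.
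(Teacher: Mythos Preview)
Your approach is essentially identical to the paper's: Taylor-expand \eqref{2.36} about $(T^*(y),\Xi^*(y))$, use the cancellations recorded in Lemma~\ref{lemma 1.1} to collapse it to a cubic, rescale via \eqref{2.37}, and apply the implicit function theorem at the simple roots $\mu=\pm\sqrt{c_1/c_2}$. The paper carries this out verbatim (see \eqref{2.41}--\eqref{2.45}).

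There is one bookkeeping slip worth fixing. Your formula $c_1(y)=-T^*(y)\bigl[\p^2_{t\xi}(\phi(\xi,Y(t,\xi,y)))\bigr]^*$ is incomplete: when you split $t\phi(\xi,Y(t,\xi,y))=T^*(y)\phi(\cdots)+(t-T^*(y))\phi(\cdots)$, the second summand also contributes a term $(\p_\xi\phi^*+\p_\eta\phi^*\p_\xi Y^*)$ to the coefficient of $(t-T^*(y))(\xi-\Xi^*(y))$. The paper's correct definition is \eqref{2.42},
\[
c_1(y)=-\Bigl(T^*(y)\bigl[\p^2_{t\xi}(\phi(\xi,Y(t,\xi,y)))\bigr]^*+(\p_\xi\phi^*+\p_\eta\phi^*\p_\xi Y^*)\Bigr)
=-\f{1}{\p_\xi\phi(0,0)}+O(\dl)>0.
\]
This matters because your positivity argument invokes $\p_\eta\psi(0,0)<0$, which is not implied by \eqref{Y-1} (only $\p_\eta\psi(0,0)\ge -\tfrac12$ is assumed; $\p_\eta\psi(0,0)$ may be nonnegative). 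With the missing term restored, $c_1(y)>0$ follows from $\p_\xi\phi(0,0)<0$ alone, and no extra sign hypothesis is needed.
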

\begin{proof}
At first, we have that  for fixed $y\in(-\dl,\dl)$,
\ben\label{2.41}
&&t\phi(\xi,Y(t,\xi,y))-T^*(y)\phi(\Xi^*(y),Y^*(y))\no\\
&=&(\phi^*+T^*(y)\p_\eta \phi^*\p_t  Y^*)(t-T^*(y))+T^*(y)(\p_\xi \phi^*+\p_\eta \phi^*\p_\xi  Y^*)(\xi-\Xi^*(y))\no\\
&& +\f{T^*(y)}{2}\left[\p^2_\xi
\left(\phi(\xi,Y(t,\xi,y))\right)\right]_{t=T^*(y),\xi=\Xi^*(y)}(\xi-\Xi^*(y))^2\no\\
&& +\left(T^*(y)\left[\p^2_{t\xi}\left(\phi(\xi,Y(t,\xi,y))\right)\right]_{t=T^*(y),\xi=\Xi^*(y)}
+(\p_\xi \phi^*+\p_\eta \phi^* \p_\xi Y^*)\right)(t-T^*(y))(\xi-\Xi^*(y))\no\\
&& +\f{T^*(y)}{6}\left[\p^3_\xi\left(\phi(\xi,Y(t,\xi,y))\right)
\right]_{t=T^*(y),\xi=\Xi^*(y)}(\xi-\Xi^*(y))^3\no\\
&& +O\left((t-T^*(y))^2
+(t-T^*(y))|\xi-\Xi^*(y)|^2+|\xi-\Xi^*(y)|^4\right)\no\\
&=&(\phi^*+\psi^*\f{\p_\eta \phi^*}{\p_\xi \phi^*})(t-T^*(y))-(\xi-\Xi^*(y))\no\\
&&+\left(T^*(y)\p^2_{t\xi}\left(\phi(\xi,Y(t,\xi,y))\right)|_{t=T^*(y),\xi=\Xi^*(y)}
+(\p_\xi \phi^*+\p_\eta \phi^*\p_\xi  Y^*)\right)(t-T^*(y))(\xi-\Xi^*(y))\no\\
&&+\f{T^*(y)}{6}\left[\p^3_\xi\left(\phi(\xi,Y(t,\xi,y))\right)
\right]_{t=T^*(y),\xi=\Xi^*(y)}(\xi-\Xi^*(y))^3\no\\
&&+O\left((t-T^*(y))^2+(t-T^*(y))|\xi-\Xi^*(y)|^2+|\xi-\Xi^*(y)|^4\right).
\een
Similarly to \eqref{2.29-2} and \eqref{2.29-3}, we can arrive at
\beq
\label{2.42}c_1(y)\triangleq-\left(T^*(y)\left[\p^2_{t\xi}\left(\phi(\xi,Y(t,\xi,y))\right)\right]_{t=T^*(y),\xi=\Xi^*(y)}
+(\p_\xi \phi^*+\p_\eta \phi^* \p_\xi Y^*)\right)=-\f{1}{\p_\xi \phi(0,0)}+O(\dl)>0
\eeq
and
\beq
\label{2.43}c_2(y)\triangleq\f{T^*(y)}{6}\p^3_\xi\left(\phi(\xi,Y(t,\xi,y))\right)
|_{t=T^*(y),\xi=\Xi^*(y)}=-\f{1+\th_0}{\p_\xi \phi(0,0)}+O(\dl)>0.
\eeq
In this case, \eqref{2.36} can be rewritten as
\ben\label{2.44}
\vG(s,\la,\mu)&\triangleq&-c_1(y)\mu+c_2(y)\mu^3+O(s^2+s\mu^2)-\la=0.
\een
By the implicit function theorem near the point $(s,\mu,\la)=(0,\pm \sqrt{\f{c_1(y)}{c_2(y)}},0)$,
for $s\in[0,\ve)$, $|\la|<\ve$ and $y\in(-\dl,\dl)$, there exist
two real roots $\mu_\pm(s,\la)$ of \eqref{2.44} to fulfill
\beq\label{2.45}
\mu_\pm(s,\la)=\pm\sqrt{\f{c_1(y)}{c_2(y)}}+\f{\la}{2c_1(y)}+O(s+\la^2).
\eeq
Then \eqref{2.39} and \eqref{2.40} are proved.
\end{proof}

\begin{remark}\label{remark 1.1}
Although there are three real roots for the equation \eqref{2.36} of $\xi$, which are denoted by $\xi_+>\xi_c>\xi_-$,
where $\xi_\pm$ are related to $\mu_\pm|_{s=0}=\pm\sqrt{\f{c_2(y)}{c_1(y)}}$
and $\xi_c$ is related to $\mu_c|_{s=0}=0$, the root $\xi_c$ is not utilized in the paper.
\end{remark}

We denote the domain $\O_1$ formed by the cusp surface $\Si_1$:
\beq\label{2.46}
\left(x-x^*(y)-(\phi^*+\psi^*\f{\p_\eta \phi^*}{\p_\xi \phi^*})(t-T^*(y)\right)^2=\ve^2(t-T^*(y))^2,
\eeq
where $t\in [T^*(y),T^*(y)+\ve]$ and $y\in(-\dl,\dl)$.
It is easy to check $\O_1\subset \O_0$ with the cusp domain $\O_0$, where  $\O_0$ is bounded by $\Si_0$.
Next we construct the shock surface $\Si$ in domain $\O_1$.

It follows from \eqref{R-H} that
\beq\label{2.48}
\left\{
\begin{array}{l}
  \p_t \vp(t,y)+\ds\f{[G(u)]}{[u]}\p_y \vp(t,y)=\f{[F(u)]}{[u]}, \\
  \vp(T^*(y),y)=x^*(y)=\Xi^*(y)+T^*(y)\phi(\Xi^*(y),Y^*(y)).
\end{array}
\right.
\eeq

\begin{lemma}\label{lemma 1.3}
Under conditions \eqref{Y-0}-\eqref{Y-1}, for small $\ve>0$, the solution $x=\vp(t,y)$
of \eqref{2.48} exists uniquely in the domain $\{(t,y): 0\le t-T^*(y)<\ve,\ y\in(-\f{\dl}{2},\f{\dl}{2})\}$
and satisfies that\\
(1) $\vp(t,y)$ is a $C^{2}$ function on the variables $(s, y)$, where $s\in[0,\ve)$ and $y\in(-\f{\dl}{2},\f{\dl}{2})$;\\
(2) $\vp(t,y)\in C^{2}\left(\{(t,y): 0\le t-T^*(y)<\ve,\ y\in(-\f{\dl}{2},\f{\dl}{2})\}\right)$ admits the expansion
\beq\label{2.49}
\vp(t,y)=x^*(y)+(\phi^*+\psi^*\f{\p_\eta \phi^*}{\p_\xi \phi^*})(t-T^*(y))+O((t-T^*(y))^2);
\eeq
(3) the entropy condition \eqref{entropy condition} holds.
\end{lemma}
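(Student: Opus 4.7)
The plan is to reformulate the Rankine--Hugoniot ODE \eqref{2.48} in the rescaled variables $s=(t-T^*(y))^{1/2}$, $\la=(x-x^*(y)-c(y)(t-T^*(y)))/s^3$ from \eqref{2.37}, where $c(y)=\phi^*+\psi^*\frac{\p_\eta\phi^*}{\p_\xi\phi^*}$, so that the apparently singular coefficients become regular functions of $(s,y,\la)$. By Lemma \ref{lemma 1.2}, evaluating $u_{\pm}=u_0(\xi_\pm(t,\vp(t,y),y),Y(t,\xi_\pm,y))$ along a candidate shock $x=\vp(t,y)$ and using $c_1(y),c_2(y)>0$, I expect to obtain expansions of the shape
\[
[u]=u_+-u_-=s\bigl(\p_\xi u_0(0,0)+O(\dl)\bigr)(\mu_+-\mu_-)+O(s^2),
\]
with a parallel expansion for $[F(u)]$, $[G(u)]$. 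Since $\p_\xi\phi,\p_\eta\psi$ and hence $\p_\xi u_0$, $\p_\eta u_0$ are nonzero at $(0,0)$, after factoring out the common $s$ the ratios $[F(u)]/[u]$ and $[G(u)]/[u]$ extend to $C^2$ functions of $(s,y,\la)$ up to $s=0$, with limits $\phi^*$ and $\psi^*$ respectively at $s=0$.

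Next I would introduce the rescaled unknown $\Phi(s,y)=\bigl(\vp(T^*(y)+s^2,y)-x^*(y)-c(y)s^2\bigr)/s^3$ (equivalently, $\la$ evaluated on $\Si$). Using $\p_t=\frac{1}{2s}\p_s$, the ODE \eqref{2.48} transforms into
\[
\tfrac{1}{2}\p_s\Phi+\tfrac{3}{2s}\Phi+s\,\tilde h_1(s,y,\Phi)\,\p_y\Phi=\tilde h_2(s,y,\Phi),
\]
for suitable $C^2$ functions $\tilde h_1,\tilde h_2$ coming from the expansions of $[F(u)]/[u]-c(y)$ and $[G(u)]/[u]$ (the subtraction of $c(y)$ kills the leading constant on the right and absorbs into $\tilde h_2$). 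The apparent singular factor $\frac{1}{s}\Phi$ is the main obstacle, but it is handled by a blow-up of the initial data: at $s=0$ the equation forces $\Phi(0,y)=\frac{2}{3}\tilde h_2(0,y,\Phi(0,y))$, which determines $\Phi(0,y)$ uniquely (by the implicit function theorem, the linearization being $1-\frac{2}{3}\p_\Phi\tilde h_2=1+O(\dl)\neq 0$); setting $\Psi=\Phi-\Phi(0,y)$ converts the equation into a regular first-order quasilinear PDE in $(s,y)$ for which short-time existence, uniqueness, and $C^2$ regularity follow from the method of characteristics.

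Unwinding the substitution then gives the expansion \eqref{2.49}, namely $\vp(t,y)=x^*(y)+c(y)(t-T^*(y))+O((t-T^*(y))^2)$, and shows $\vp$ is $C^2$ in $(s,y)$. Item (2) follows because $s^2=t-T^*(y)$ is smooth in $(t,y)$ away from $\G$ (and the $O((t-T^*(y))^2)$ remainder is smooth enough to make $\vp$ itself $C^2$ in $(t,y)$ on the half-neighbourhood).

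Finally, for the entropy condition \eqref{entropy condition}, I would argue geometrically using Lemma \ref{lemma 1.1}: on $\Si$ the two relevant characteristics correspond to the roots $\xi_+$ and $\xi_-$ from Lemma \ref{lemma 1.2}, with tangent directions $(1,\phi(\xi_\pm,Y),\psi(\xi_\pm,Y))$. A short computation using \eqref{2.39}--\eqref{2.40} and the expansion \eqref{2.49} gives, modulo $O(s^2)$,
\[
\bigl(1,f(u_\pm),g(u_\pm)\bigr)\cdot\bigl(-\p_t\vp,1,-\p_y\vp\bigr)=\mp s\,\bigl(\p_\xi\phi(0,0)\bigr)\sqrt{\tfrac{c_1(y)}{c_2(y)}}+O(s^2),
\]
and since $\p_\xi\phi(0,0)<0$ and $c_1,c_2>0$, the sign requirement of \eqref{entropy condition} holds for all sufficiently small $s>0$. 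The main obstacle throughout will be the bookkeeping of the nonsmooth expansions in step two so as to verify that $\tilde h_1,\tilde h_2$ really are $C^2$ in $(s,y,\Phi)$; this is where the regularity statement $s\mapsto s^\al(\xi_\pm-\Xi^*(y))\in C^{2+\al}$ from Lemma \ref{lemma 1.2} is essential.
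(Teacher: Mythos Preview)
Your overall strategy---rescaling to $(s,\la)$ via \eqref{2.37}, showing that the shock speeds extend as regular functions of $(s,y,\la)$ up to $s=0$, and then solving a transformed PDE---is exactly the route the paper takes. But there is a genuine gap in how you handle the singular transformed equation, and a nontrivial cancellation you skip over.

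\textbf{The hidden identity.} After substituting $\vp=x^*(y)+c(y)s^2+s^3\Phi$ with $c(y)=\phi^*+\psi^*\frac{\p_\eta\phi^*}{\p_\xi\phi^*}$, the right-hand side picks up a term of order $s^{-1}$ of the form
\[
\frac{1}{s}\Bigl\{\phi^*-\bigl(1-\psi^*\tfrac{d}{dy}T^*(y)\bigr)c(y)-\psi^*\tfrac{d}{dy}\bigl(\Xi^*(y)+T^*(y)\phi^*\bigr)\Bigr\},
\]
and this does \emph{not} vanish just by ``subtracting $c(y)$'': the contribution from $[G(u)]/[u]\cdot\p_y\vp$ is essential. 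The paper shows this bracket is identically zero by differentiating the relation $y\equiv Y^*(y)+T^*(y)\psi^*$; you should expect to need this identity explicitly.

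\textbf{The Fuchsian singularity.} More seriously, your handling of the term $\tfrac{3}{2s}\Phi$ is incorrect. If $\tilde h_2$ is $C^2$ up to $s=0$ as you assert, then multiplying your equation by $s$ and sending $s\to 0$ forces $\Phi(0,y)=0$, not the implicit relation $\Phi(0,y)=\tfrac{2}{3}\tilde h_2(0,y,\Phi(0,y))$ you write. And the substitution $\Psi=\Phi-\Phi(0,y)$ does nothing: the term $\tfrac{3}{2s}\Psi$ persists, so the resulting equation is \emph{not} a regular quasilinear PDE, and the ordinary method of characteristics does not apply. Along characteristics the equation is genuinely of Fuchsian type,
\[
s\,\frac{d\La}{ds}=-\al\,\La+\tilde Q(s,\La,y),\qquad \al=3+\tfrac{1}{\bar c_1}>2,
\]
with $\La(0)=0$. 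The paper proves a separate existence/uniqueness/$C^2$-regularity lemma (Lemma~\ref{lemma 4.1} in the Appendix) for exactly this singular IVP, via the integrating-factor representation $\La(s)=s^{-\al}\int_0^s\th^{\al-1}\tilde Q\,d\th$ and a contraction argument. This is the missing ingredient; without it, existence and $C^2$ regularity in $(s,y)$ are not established.

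\textbf{Entropy condition.} Your direct computation is a legitimate alternative to the paper's mean-value argument (which shows $\vF(\xi)=\phi(\xi,Y)-\psi(\xi,Y)\p_y\vp$ is strictly decreasing on $[\xi_-,\xi_+]$), but the leading coefficient you write is off: the correct leading term is $\pm\dfrac{s}{(T^*)^2\p_\xi\phi^*}\sqrt{c_1/c_2}+O(s^2)$, with the sign $\pm$ matching $\xi_\pm$. With your $\mp s\,\p_\xi\phi(0,0)\sqrt{c_1/c_2}$ the $u_+$ side comes out positive, contradicting \eqref{entropy condition}; so the computation needs to be redone (or replaced by the monotonicity argument).
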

\begin{proof}
At first, we derive the asymptotic properties of $\f{[F(u)]}{[u]}$ and $\f{[G(u)]}{[u]}$ near $\G$.
It follows from direct computation that
\ben\label{2.50}
\f{[F(u)]}{[u]}&=&\f{1}{2}\left[\phi(\xi_+,Y(t,\xi_+,y))
+\phi(\xi_-,Y(t,\xi_-,y)))\right]
+O\left((\xi_+-\Xi^*(y))^2+(\xi_--\Xi^*(y))^2\right)\no\\
&=&\phi^*+\f{1}{2}\left(\p_\xi \phi^*+\p_\eta \phi^*\p_\xi  Y^*\right)
\left((\xi_+-\Xi^*(y))+(\xi_--\Xi^*(y))\right)
+O\left((\xi_+-\Xi^*(y))^2+(\xi_--\Xi^*(y))^2\right)\no\\
&=&\phi^*-\f{1}{T^*(y)}\f{s\la}{2c_1(y)}+O(s^2+s\la^2)
\een
and
\ben\label{2.51}
\f{[G(u)]}{[u]}&=&\psi^*+\f{1}{2}\left(\p_\xi \psi^*+\p_\eta \psi^*\p_\xi  Y^*\right)
\left((\xi_+-\Xi^*(y))+(\xi_--\Xi^*(y))\right)
+O\left((\xi_+-\Xi^*(y))^2+(\xi_--\Xi^*(y))^2\right)\no\\
&=&\psi^*-\f{\p_\xi \psi^*}{T^*(y)\p_\xi \phi^*}\f{s\la}{2c_1(y)}+O(s^2+s\la^2).
\een

In addition, by \eqref{2.16} and \eqref{2.17}, we deduce
\beq
\f{d}{dy}T^*(y)=-\f{T^*(y)\p_\eta H^*\p_y  Y^*}{H^*+T^*(y)\p_\eta H^*\p_t Y^*}.
\eeq
Due to $s=\left(t-T^*(y)\right)^\f{1}{2}$, we then  have
\beq
\p_t s=\f{1}{2s},\quad  \p_y s=-\f{1}{2s}\f{d}{dy}T^*(y)=\f{T^*(y)\p_\eta H^*\p_y  Y^*}{2s(H^*+T^*(y)\p_\eta H^*\p_t  Y^*)}.
\eeq
Thus the equation in \eqref{2.48} becomes
\beq
\f{1}{2s}\left(1-\f{[G(u)]}{[u]}\f{d}{dy}T^*(y)\right)\p_s \vp+\f{[G(u)]}{[u]}\p_y \vp=\f{[F(u)]}{[u]}.
\eeq
Set $\la=\ds\f{\vp-x^*(y)-(\phi^*+\psi^*\f{\p_\eta \phi^*}{\p_\xi \phi^*})(t-T^*(y))}{s^3}$.
Then $\vp=\Xi^*(y)+T^*(y)\phi^*+(\phi^*+\psi^*\f{\p_\eta \phi^*}{\p_\xi \phi^*})s^2+s^3\la$, and we have that
\beq
\p_s\vp=s^3\p_s \la+3s^2\la+2s\left(\phi^*+\f{\p_\eta \phi^*}{\p_\xi \phi^*}\psi^*\right)
\eeq
and
\beq
\p_y\vp=s^3\p_y \la+\f{d}{dy}\left(\Xi^*+T^*(y)\phi^*\right)
+s^2\f{d}{dy}\left(\phi^*+\psi^*\f{\p_\eta \phi^*}{\p_\xi \phi^*}\right).
\eeq
Thus \eqref{2.48} becomes
\beq\label{Y-3}
\left\{
\begin{array}{l}
s C_0(s,\la,y)\p_s\la+s^2C_1(s,\la,y)\p_y\la=C_2(s,\la,y),\\
\la(0,y)=0,
\end{array}
\right.
\eeq
where
\ben
C_0&=&\f{1}{2}\left(1-\f{[G(u)]}{[u]}\f{d}{dy}T^*(y)\right)=\f{1}{2}+O(y),\no\\
C_1&=&\f{[G(u)]}{[u]}=O(y+s^2+s\la),\no\\
C_2&=&-\f{3\la}{2}\left(1-\f{[G(u)]}{[u]}\f{d}{dy}T^*(y)\right)
-s\f{[G(u)]}{[u]}\f{d}{dy}\left(\phi^*+\psi^*\f{\p_\eta \phi^*}{\p_\xi \phi^*}\right)\no\\
&&+\f{1}{s}\left\{\f{[F(u)]}{[u]}-\left(1-\f{[G(u)]}{[u]}\f{d}{dy}T^*(y)\right)
\cdot\left(\phi^*+\f{\p_\eta \phi^*}{\p_\xi \phi^*}\psi^*\right)
-\f{[G(u)]}{[u]}\f{d}{dy}\left(\Xi^*+T^*(y)\phi^*\right)\right\}\no\\
&=&\la\left\{-\f{3}{2}\left(1-\psi^*\f{d}{dy}T^*(y)\right)-\f{1}{2T^*(y)c_1(y)}
-\f{\p_\xi \psi^*}{2T^*(y)\p_\xi \phi^* c_1(y)}\cdot\f{d}{dy}T^*(y)
\left(\phi^*+\psi^*\f{\p_\eta \phi^*}{\p_\xi \phi^*}\right)\right.\no\\
&&\left.+\f{\psi^*_\xi}{2T^*(y)\phi^*_\xi c_1(y)}\cdot\f{d}{dy}
\left(\Xi^*+T^*(y)\phi^*\right)\right\}+O(s+\la^2)\no\\
&&+\f{1}{s}\left\{\phi^*-\left(1-\psi^*\f{d}{dy}T^*(y)\right)\cdot
\left(\phi^*+\psi^*\f{\p_\eta \phi^*}{\p_\xi \phi^*}\right)
-\psi^*\f{d}{dy}\left(\Xi^*+T^*(y)\phi^*\right)\right\}.
\een
Due to $y\equiv Y^*(y)+T^*(y)\psi(\Xi^*(y),Y^*(y))$, direct computation yields
\ben
&&\phi^*-\left(1-\psi^*\f{d}{dy}T^*(y)\right)\cdot
\left(\phi^*+\psi^*\f{\p_\eta \phi^*}{\p_\xi \phi^*}\right)
-\psi^*\f{d}{dy}\left(\Xi^*+T^*(y)\phi^*\right)\no\\
&=&\psi^*\left\{-\f{\p_\eta \phi^*}{\p_\xi \phi^*}+\psi^*\f{\p_\eta \phi^*}{\p_\xi \phi^*}\f{d}{dy}T^*(y)
-\f{d}{dy}\Xi^*(y)-T^*(y)\left[\p_\xi \phi^*\f{d}{dy}\Xi^*(y)+\p_\eta \phi^*\f{d}{dy}Y^*\right]\right\}\no\\
&=&\psi^*\left\{-\f{\p_\eta \phi^*}{\p_\xi \phi^*}+\psi^*\f{\p_\eta \phi^*}{\p_\xi \phi^*}\f{d}{dy}T^*(y)
+T^*(y)\psi^*_\eta\f{d}{dy}\Xi^*(y)-T^*(y)\p_\eta \phi^*\f{d}{dy}Y^*\right\}\no\\
&=&\f{\psi^*\p_\eta \phi^*}{\p_\xi \phi^*}\left\{-1+\psi^*\f{d}{dy}T^*(y)
+T^*\p_\xi \psi^*\f{d}{dy}\Xi^*(y)+T^*(y)\p_\eta \psi^*\f{d}{dy}Y^*+\f{d}{dy}Y^*\right\}.\no\\
&\equiv&0.
\een
Consequently,
\beq
C_2=\left(-\f{3}{2}-\f{1}{2\bar c_1}+O(y)\right)\la+O(s+\la^2),
\eeq
where $\bar c_1=c_1(0)=-\f{1}{\p_\xi(0,0)}$.

We define the characteristics of \eqref{Y-3} starting from the point $(0, \beta)$ with $\beta\in(-\dl,\dl)$ as follows
\beq\label{Y-4}
\left\{
\begin{array}{l}
\f{d}{ds}y(s;\beta)=s\f{C_1}{C_0}(s,\la(s,y(s;\beta)),y(s;\beta)),\\
y(0;\beta)=\beta.
\end{array}
\right.
\eeq
Along this characteristics, set $\La(s;\beta)=\la(s,y(s;\beta))$, we then have
\beq\label{Y-5}
\left\{
\begin{array}{l}
\f{d}{ds}\La(s;\beta)=\f{1}{s}\f{C_2}{C_0}(s,\La(s;\beta),y(s;\beta))\\
\La(0;\beta)=0.
\end{array}
\right.
\eeq
Collecting \eqref{Y-4} and \eqref{Y-5}, we obtain a nonlinear ordinary differential equation
system of $(y(s;\beta), \La(s;\beta))$ as follows
\beq\label{Y-6}
\left\{
\begin{array}{l}
\f{d}{ds}y(s;\beta)=s\f{C_1}{C_0}(s,\La(s;\beta),y(s;\beta)),\\
s\f{d}{ds}\La(s;\beta)=\f{C_2}{C_0}(s,\La(s;\beta),y(s;\beta)),\\
y(0;\beta)=\beta,\ \La(0;\beta)=0,
\end{array}
\right.
\eeq
where $\f{C_1}{C_0}(s,\La,y),\ \f{C_2}{C_0}(s,\La,y)\in C^2$.
By Lemma \ref{4.1} in Appendix,  \eqref{Y-6} has a unique solution $\left(y(s,\beta),\La(s,\beta)\right)$
$\in C^2\left([0,\ve)\times(-\dl,\dl)\right)$,
moreover, $\f{1}{2}\leq\f{\p y}{\p\beta}(s,\beta)\leq \f{3}{2}$ holds for $s\in[0,\ve)$ and small $\ve$.
This derives $\beta=\beta(s,y)\in C^2\left([0,\ve)\times(-\f{\dl}{2},\f{\dl}{2})\right)$ by the implicit
function theorem and then $\La(s,\beta(s,y))\in C^2\left([0,\ve)\times(-\f{\dl}{2},\f{\dl}{2})\right)$.
In addition, it follows from \eqref{Y-5} that $\La(s,\beta(s,y))=O(s)$ holds.
Therefore,
$$
x=\vp(t,y)=s^3\La(s,\beta(s,y))+x^*(y)+s^2(\phi^*+\psi^*\f{\p_\eta \phi^*}{\p_\xi \phi^*})
\in C^2\left(\{(t,y): 0\le t-T^*(y)<\ve,\ y\in(-\f{\dl}{2},\f{\dl}{2})\}\right)
$$
and then the expansion \eqref{2.49} is proved.

At last we verify the entropy condition \eqref{entropy condition}. At the point $P=(t,\vp(t,y),y)$, the normal vector
of $\Si$ is
$(-\p_t \vp,1,-\p_y \vp)$ and the related two characteristic vectors are
$\left(1,\phi(\xi_\pm,Y(t,\xi_\pm,y)),\psi(\xi_\pm,Y(t,\xi_\pm,y)\right)$,
where $\xi_\pm=\xi_\pm(t,\vp(t,y),y)$ (see Figure 5 below). Without loss of generality,
$\xi_-<\xi_+$ is assumed. Since the Rankine-Hugoniot
condition on $\Si$ is
\beq
\left(1,\f{[F(u)]}{[u]},\f{[G(u)]}{[u]}\right)\cdot(-\p_t \vp,1,-\p_y \vp)=0,
\eeq
then for fixed $t$ and $y$, there is a $\bar\xi\in[\xi_-,\xi_+]$ such that
\beq\label{Y-7}
\p_t\vp=\phi(\bar\xi,Y(t,\bar\xi,y))-\psi(\bar\xi,Y(t,\bar\xi,y))\cdot\p_y\vp.
\eeq
Next we prove that $\vF(\xi)\triangleq\phi(\xi,Y(t,\xi,y))-\psi(\xi,Y(t,\xi,y))\cdot\p_y\vp$
is decreasing for $\xi\in[\xi_-,\xi_+]$ when the fixed $t\in(T^*(y),T^*(y)+\ve]$ and $y\in(-\dl,\dl)$.
Indeed, Note that
\beq
\vF'(\xi)=\f{\p_\xi\phi}{1+t\p_\eta\psi}-\f{\p_\xi\psi}{1+t\p_\eta\psi}\cdot\p_y\vp.
\eeq
In addition, by \eqref{2.31} we have
\beq
\p_y\vp(t,y)=-\f{\p_\xi H^*\p_\eta\phi^*+\p_\eta H^*\p_\eta \psi^*}{\p_\xi H^*\p_\xi \phi^*+\p_\eta H^*\p_\xi \psi^*}+O(s)
=-\f{\p_\eta \phi^*}{\p_\xi \phi^*}+O(s).
\eeq
Thus by $H^*=\p_\xi \phi^*+\p_\eta \psi^*=-1+O(y)$, we obtain that for $t\in[T^*(y),T^*(y)+\ve]$ and $y\in(-\dl,\dl)$,
\beq
\vF'(\xi)=\f{\p_\xi \phi^*}{1+T^*(y)\p_\eta \psi^*}
+\f{\p_\xi \psi^*}{1+T^*(y)\p_\eta \psi^*}\cdot\f{\p_\eta \phi^*}{\p_\xi \phi^*}+O(s)=-\f{1}{1+T^*(y)\p_\eta \psi^*}+O(s+|y|)<0,
\eeq
This means that $\vF(\xi)$
is decreasing on the variable $\xi\in[\xi_-,\xi_+]$ and then $\vF(\xi_+)<\vF(\bar\xi)<\vF(\xi_-)$ holds.
This together with \eqref{Y-7} yields that on $\Sigma$
\beq
\phi(\xi_+,Y(t,\xi_+,y))-\psi(\xi_+,Y(t,\xi_+,y))\cdot\p_y\vp
<\p_t\vp<\phi(\xi_-,Y(t,\xi_-,y))-\psi(\xi_-,Y(t,\xi_-,y))\cdot\p_y\vp,
\eeq
which derives the entropy condition \eqref{entropy condition}.
\end{proof}

\begin{figure}[h]
\centering
\includegraphics[scale=0.50]{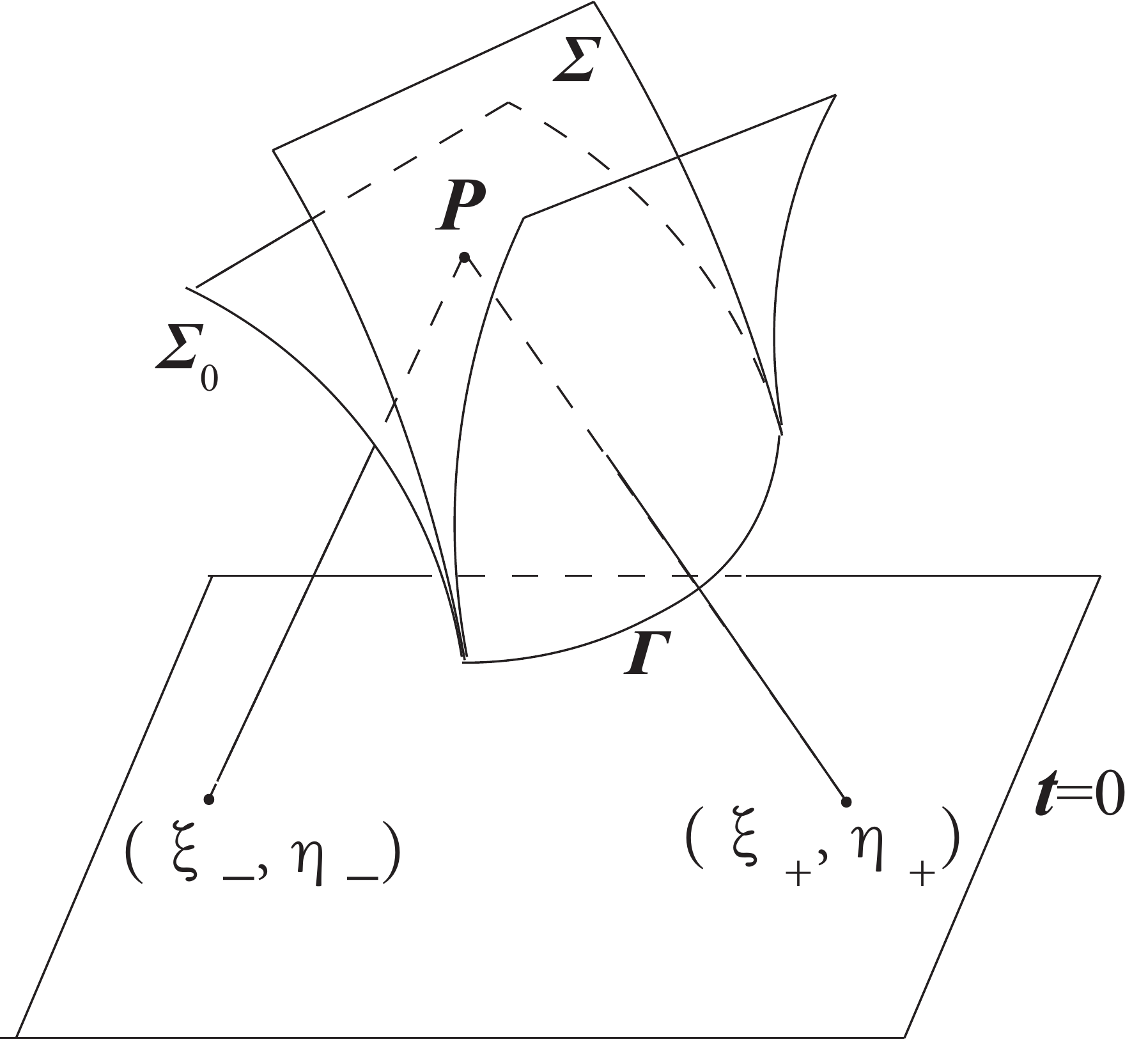}
\end{figure}
\centerline{\bf Figure 5.}

\section{Behavior of the solution $u$ near $\G$ and proof of Theorem 1.1.}
It is easy to see that the map $(\xi,\eta)\to (x(t,\xi,\eta),y(t,\xi,\eta))$ in \eqref{2.5}
is invertible outside the cusp domain $\O_0$.
Moreover, for $t\in[T^*(y),T^*(y)+\ve)$ and $y\in(-\dl,\dl)$,
there exists a positive constant $\al_0(\dl)$
depending on $\dl$ such that for $x>x^*(y)+(\phi^*+\psi^*\f{\p_\eta \phi^*}{\p_\xi \phi^*})(t-T^*(y))+\left(\f{2}{3\p_\xi \phi(0,0)\sqrt{3+3\th_0}}+\al_0(\dl)\right)(t-T^*(y))^\f{3}{2}$,
there exists a $\xi_+(t,x,y)$
satisfying \eqref{2.5}; and for $x<x^*(y)+(\phi^*+\psi^*\f{\p_\eta \phi^*}{\p_\xi \phi^*})(t-T^*(y))+\left(-\f{2}{3\p_\xi \phi(0,0)\sqrt{3+3\th_0}}+\al_0(\dl)\right)(t-T^*(y))^\f{3}{2}$,
there exists a $\xi_-(t,x,y)$
satisfying \eqref{2.5}. From \eqref{2.36}, we start to  improve Lemma 2.2 so that the better
asymptotic behaviors of $\xi_\pm(t,x,y)$ are obtained.

\begin{lemma}\label{lemma 1.4}
For each number $c>-\f{2}{3\p_\xi \phi(0,0)\sqrt{3+3\th_0}}+\al_0(\dl)$,
there exist a constant $\ve(c)>0$
such that for $s\in[0,\ve(c))$ and $|\la-c|<\ve(c)$, $(s,\la)\to \xi_+$ satisfies
\beq\label{3.1}
\xi_+(t,x,y)=\Xi^*(y)+s\left(\mu_c(y)+\f{\la-c}{-c_1(y)+3c_2\mu^2_c(y)}\right)+O(s^2+s|\la-c|^2),
\eeq
and for $s\in[0,\ve(c))$ and $|\la+c|<\ve(c)$, $(s,\la)\to \xi_-$ satisfies
\beq\label{3.2}
\xi_-(t,x,y)=\Xi^*(y)+s\left(-\mu_c(y)+\f{\la+c}{-c_1(y)+3c_2\mu^2_c(y)}\right)+O(s^2+s|\la+c|^2),
\eeq
where $\mu_c(y)$ is the solution of the following algebraic equation for $y\in(-\dl,\dl)$
\beq\label{3.3}
\vG_0(\mu;y)=-c_1(y)\mu+c_2(y)\mu^3=c.
\eeq
\end{lemma}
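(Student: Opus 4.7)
The plan is to apply the implicit function theorem to \eqref{2.44}, now centered at the base point $(s,\la,\mu)=(0,c,\mu_c(y))$ (and at $(0,-c,-\mu_c(y))$ for the $\xi_-$ case) in place of the base point $(0,0,\pm\sqrt{c_1(y)/c_2(y)})$ used in Lemma~\ref{lemma 1.2}. The compatibility condition $\vG(0,c,\mu_c(y);y)=\vG_0(\mu_c(y);y)-c=0$ is built into the defining equation \eqref{3.3} for $\mu_c(y)$.

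The crucial step is the quantitative non-degeneracy estimate
\begin{equation*}
\p_\mu\vG(0,c,\mu_c(y);y)=-c_1(y)+3c_2(y)\mu_c^2(y)\geq c_*(c,\al_0)>0\quad\text{uniformly in }y\in(-\dl,\dl).
\end{equation*}
The cubic $\vG_0(\cdot;y)$ has critical points at $\mu=\pm\sqrt{c_1(y)/(3c_2(y))}$ with critical values $\mp\f{2c_1(y)}{3}\sqrt{c_1(y)/(3c_2(y))}$, and by \eqref{2.42}--\eqref{2.43} the local maximum equals $-\f{2}{3\p_\xi\phi(0,0)\sqrt{3+3\th_0}}+O(\dl)$. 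The hypothesis $c>-\f{2}{3\p_\xi\phi(0,0)\sqrt{3+3\th_0}}+\al_0(\dl)$ places $c$ strictly above this maximum with a gap of order $\al_0(\dl)$, so $\vG_0(\cdot;y)=c$ admits a unique real root $\mu_c(y)$ lying above $\sqrt{c_1(y)/(3c_2(y))}$ with $\mu_c^2(y)-c_1(y)/(3c_2(y))$ bounded below by a positive constant depending only on $c$ and $\al_0(\dl)$; multiplying by $3c_2(y)>0$ then yields the required lower bound on $-c_1(y)+3c_2(y)\mu_c^2(y)$.

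Given this non-degeneracy, the implicit function theorem produces $\ve(c)>0$ and a $C^2$ function $\mu=M_+(s,\la;y)$ on $\{|s|<\ve(c),\ |\la-c|<\ve(c),\ |y|<\dl\}$ solving $\vG=0$ with $M_+(0,c;y)=\mu_c(y)$. Implicit differentiation in $\la$, using $\p_\la\vG=-1$, gives $\p_\la M_+|_{(0,c)}=1/(-c_1(y)+3c_2(y)\mu_c^2(y))$, and Taylor's theorem then yields
\begin{equation*}
M_+(s,\la;y)=\mu_c(y)+\f{\la-c}{-c_1(y)+3c_2(y)\mu_c^2(y)}+O(s+(\la-c)^2),
\end{equation*}
so that \eqref{3.1} follows via the substitution $\xi=\Xi^*(y)+s\mu$ from \eqref{2.37}. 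The estimate \eqref{3.2} is obtained by repeating the same argument at the symmetric base point $(0,-c,-\mu_c(y))$, exploiting the odd symmetry $\vG_0(-\mu;y)=-\vG_0(\mu;y)$ to identify $-\mu_c(y)$ as the unique real root of $\vG_0(\mu;y)=-c$. The main (in fact only) obstacle is the quantitative non-degeneracy check in the second paragraph; once it is in hand, the remainder reduces to a standard IFT application followed by a first-order Taylor expansion.
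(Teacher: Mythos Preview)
Your proposal is correct and follows essentially the same approach as the paper's proof: both identify $(0,\pm c,\pm\mu_c(y))$ as base points for \eqref{2.44}, verify the non-degeneracy $\p_\mu\vG=-c_1(y)+3c_2(y)\mu_c^2(y)>0$ together with $\p_\la\vG=-1$, and then invoke the implicit function theorem exactly as in Lemma~\ref{lemma 1.2}. Your treatment is in fact more detailed than the paper's, which simply asserts $\mu_c(y)>\frac{1}{\sqrt{3+3\th_0}}$ and the positivity of $\p_\mu\vG$ without the explicit critical-value analysis you supply.
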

\begin{proof}
Note that for $c>-\f{2}{3\p_\xi \phi(0,0)\sqrt{3+3\th_0}}+\al_0(\dl)$,
there is a real root $\mu_c(y)>\f{1}{\sqrt{3+3\th_0}}$ for \eqref{3.3}.
On the other hand,  by \eqref{2.44}, one can arrive at
\ben
\label{3.4}\p_\mu\vG(0,\pm c,\pm \mu_c(y))&=&-c_1(y)+3c_2(y)\mu_c^2(y)>0,\\
\label{3.5}\p_\la\vG(0,\pm c,\pm \mu_c(y))&=&-1.
\een
Similarly  to the proof of Lemma \ref{lemma 1.2}
by implicit function theorem, we can obtain \eqref{3.1} and \eqref{3.2}.
\end{proof}

\begin{remark}\label{remark 1.3}
We point out that if $c\geq0$, then $\mu_c(y)\geq\sqrt{\f{c_1(y)}{c_2(y)}}
=\f{1}{\sqrt{1+\th_0}}+O(\dl)>0$ and $-c_1(y)+3c_2(y)\mu_c^2(y)\geq c_0>0$ hold for some constant $c_0>0$
when $\dl>0$ is sufficiently small.
\end{remark}

To study the asymptotic behaviors of $\xi_\pm$ near
$x=x^*(y)$,
we now take the following transformation
\beq\label{3.6}
\zeta=\left(x-x^*(y)-(\phi^*+\psi^*\f{\p_\eta \phi^*}{\p_\xi \phi^*})(t-T^*(y))\right)^\f{1}{3},\
\eta=\f{t-T^*(y)}{\zeta^{2}},\ \nu=\f{\xi-\Xi^*(y)}{\zeta}.
\eeq
By dividing the factor  $\zeta^{3}$ for \eqref{2.36},  we have
\beq\label{3.7}
H(\eta,\zeta,\nu)\triangleq
-c_1(y)\eta\nu+c_2(y)\nu^{3}+O(\eta^2\zeta+\zeta\nu^2\eta+\zeta\nu^4)-1=0.
\eeq
Note that for $|\eta|$ small enough, there is a unique real root $\nu$ for \eqref{3.7}. Furthermore, we have

\begin{lemma}\label{lemma 1.5}
There exists a constant $\ve>0$ such that for $(\eta,\zeta)\in\{ |\eta|\le \ve,\ |\zeta|<\ve\}$,
the expansion of $\xi(t,x,y)$ on $(\eta,\zeta)$ admits
\beq
\label{3.8}\xi(t,x,y)
=\Xi^*(y)+\zeta\left(\f{1}{\sqrt[3]{c_2(y)}}
+\f{c_1(y)}{3\left(c_2(y)\right)^\f{2}{3}}\eta\right)
+O(\zeta^2+\zeta\eta^2),
\eeq
where $y\in(-\dl,\dl)$.
\end{lemma}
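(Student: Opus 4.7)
The plan is to apply the implicit function theorem to the reduced equation \eqref{3.7}, viewing $\nu$ as a function of the small parameters $(\eta,\zeta)$ and treating $y\in(-\dl,\dl)$ as a smooth bounded parameter. First I would rewrite \eqref{3.7} in the separated form $H(\eta,\zeta,\nu)=-c_1(y)\eta\nu+c_2(y)\nu^3-1+\zeta\,R(\eta,\zeta,\nu;y)$, where $R$ is a smooth function in all its arguments. This smoothness descends from the $C^5$ regularity of $F,G$ and the $C^4$ regularity of $u_0$, propagated through the expansion \eqref{2.41} and the change of variables \eqref{3.6} followed by division by $\zeta^3$.

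At $(\eta,\zeta)=(0,0)$ the equation collapses to $c_2(y)\nu^3=1$, and since $c_2(y)>0$ by \eqref{2.43}, it admits the unique real root $\nu_0(y)=c_2(y)^{-1/3}$. The $\nu$-derivative $\p_\nu H(0,0,\nu_0(y);y)=3c_2(y)\nu_0(y)^2=3c_2(y)^{1/3}$ is positive and bounded away from zero uniformly in $y\in(-\dl,\dl)$ for small $\dl$. Consequently the implicit function theorem yields a smooth solution $\nu=\nu(\eta,\zeta;y)$ on a box $\{|\eta|<\ve,\,|\zeta|<\ve\}$ with $\ve>0$ independent of $y$, uniquely fixing the branch that issues from $\nu_0(y)$ (the other two cube roots being complex).

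Next I would Taylor-expand $\nu$ around $(\eta,\zeta)=(0,0)$. Implicit differentiation gives $\p_\eta\nu(0,0;y)=-\p_\eta H/\p_\nu H=c_1(y)\nu_0(y)/(3c_2(y)\nu_0(y)^2)=c_1(y)/(3c_2(y)^{2/3})$, while $\p_\zeta\nu(0,0;y)=-R(0,0,\nu_0(y);y)/(3c_2(y)^{1/3})$ is some bounded quantity. Hence $\nu(\eta,\zeta;y)=c_2(y)^{-1/3}+\frac{c_1(y)}{3c_2(y)^{2/3}}\eta+O(\zeta+\eta^2)$. Multiplying by $\zeta$ and invoking $\xi-\Xi^*(y)=\zeta\nu$ from \eqref{3.6} yields the desired expansion \eqref{3.8}: indeed, in the regime $|\eta|<\ve$, the residual $\zeta^2$, $\zeta^3$, $\zeta^2\eta$, and $\zeta\eta^2$ contributions are all swallowed by $O(\zeta^2+\zeta\eta^2)$.

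The one step that deserves care is verifying that the shorthand remainder $O(\eta^2\zeta+\zeta\nu^2\eta+\zeta\nu^4)$ in \eqref{3.7} genuinely arises from a jointly smooth (not merely bounded) function $R(\eta,\zeta,\nu;y)$, so that the implicit function theorem delivers a $C^k$ root and the Taylor expansion above is legitimate. This is not a deep difficulty but requires one to track each higher-order term of \eqref{2.41} through the dilation \eqref{3.6}: every such term is a smooth function of $(t-T^*(y),\xi-\Xi^*(y))$ of order at least $4$ in $(t-T^*,\xi-\Xi^*)$ excluding the principal parts already isolated as $c_1,c_2$, so after substituting $t-T^*=\eta\zeta^2$, $\xi-\Xi^*=\nu\zeta$ and factoring out $\zeta^3$, every residual term is $\zeta$ times a smooth function of $(\eta,\zeta,\nu;y)$, as claimed.
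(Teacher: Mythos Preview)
Your proposal is correct and follows essentially the same approach as the paper: apply the implicit function theorem to \eqref{3.7} at $(\eta,\zeta)=(0,0)$ with root $\nu_0=c_2(y)^{-1/3}$, compute $\p_\nu H=3c_2(y)^{1/3}>0$ and $\p_\eta H=-c_1(y)/c_2(y)^{1/3}$, Taylor-expand $\nu$, and multiply by $\zeta$. Your additional paragraph verifying that the $O(\cdot)$ remainder in \eqref{3.7} is a genuinely smooth function of $(\eta,\zeta,\nu;y)$ is extra care that the paper omits but is entirely in the same spirit.
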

\begin{proof}
For $\eta=0$ and $\zeta=0$, we have $\nu=\f{1}{\sqrt[3]{c_2(y)}}$. Due to
\ben
\p_\nu H(0,0,\f{1}{\sqrt[3]{c_2(y)}})&=&3\sqrt[3]{c_2(y)}>0,\\
\p_\eta H(0,0,\f{1}{\sqrt[3]{c_2(y)}})&=&-\f{c_1(y)}{\sqrt[3]{c_2(y)}},
\een
we then have that for $|\eta|\le \ve$ and $|\zeta|<\ve$ with $\ve$ being small,
\beq
\label{3.9}
\nu=\f{1}{\sqrt[3]{c_2(y)}}+\f{c_1(y)}{3\left(c_2(y)\right)^\f{2}{3}}\eta+O(\zeta+\eta^2),
\eeq
which derives \eqref{3.8}.
\end{proof}

Next we consider the behavior of $\xi(t,x,y)$ near $\G$ for $t<T^*(y)$.
Without of confusion to \eqref{2.37}, we still denote the transformation
\beq\label{3.10}
s=(T^*(y)-t)^\f{1}{2},\ \mu=\f{\xi-\Xi^*(y)}{s},\
\la=\f{x-x^*(y)-(\phi^*+T^*(y)\p_\eta \phi^*\p_t Y^*)(t-T^*(y))}{s^3}.
\eeq
Similarly to \eqref{2.44}, \eqref{2.36} becomes
\beq
\label{3.11}
\tilde\vG(s,\la,\mu)\triangleq c_1(y)\mu+c_2(y)\mu^3+O(s^2+s\mu^2)-\la=0.
\eeq
Then we have

\begin{lemma}\label{lemma 1.6}
For each $c\in\bR$,
there exists a positive constant $\ve(c)$
such that for $s\in[0,\ve(c))$ and $|\la-c|<\ve(c)$, $(s,\la)\to \xi$ satisfies
\beq\label{3.12}
\xi(t,x,y)=\Xi^*(y)+s\left(\tilde\mu_c(y)+\f{\la-c}{c_1(y)+3c_2\tilde\mu^2_c(y)}\right)+O(s^2+s|\la-c|^2),
\eeq
where $y\in(-\dl,\dl)$, and $\tilde\mu_c(y)$ is the real root of
\beq\label{3.13}
\tilde\vG_0(\mu;y)=c_1(y)\mu+c_2(y)\mu^3=c.
\eeq
\end{lemma}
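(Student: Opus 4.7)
The plan is to apply the implicit function theorem to the equation $\tilde\vG(s,\la,\mu)=0$ in \eqref{3.11}, paralleling the proofs of Lemmas \ref{lemma 1.2} and \ref{lemma 1.4} but in the simpler setting of a strictly monotone cubic.

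First I would record a monotonicity observation. By \eqref{2.42}--\eqref{2.43}, one has $c_1(y),c_2(y)>0$ uniformly for $y\in(-\dl,\dl)$, so
$$\f{\p\tilde\vG_0}{\p\mu}(\mu;y)=c_1(y)+3c_2(y)\mu^2\ge c_1(y)>0.$$
Hence $\mu\mapsto\tilde\vG_0(\mu;y)$ is a smooth strictly increasing bijection of $\bR$, and \eqref{3.13} admits a \emph{unique} real root $\tilde\mu_c(y)$ for every $c\in\bR$, depending smoothly on $y$ by the one-dimensional implicit function theorem. This is precisely why no restriction on $c$ appears in the statement, in contrast to Lemma \ref{lemma 1.4}, where the negative sign in front of $c_1\mu$ forced a case analysis and a threshold on $c$.

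Next I would apply the implicit function theorem to $\tilde\vG$ at the reference point $(s,\la,\mu)=(0,c,\tilde\mu_c(y))$. By definition $\tilde\vG(0,c,\tilde\mu_c(y))=0$, and
$$\p_\mu\tilde\vG(0,c,\tilde\mu_c(y))=c_1(y)+3c_2(y)\tilde\mu_c^2(y)\ge c_1(y)>0,\qquad \p_\la\tilde\vG(0,c,\tilde\mu_c(y))=-1,$$
with the first estimate uniform in $y$. This yields $\ve(c)>0$ and a $C^2$ function $\mu=\mu(s,\la,y)$ on $[0,\ve(c))\times(c-\ve(c),c+\ve(c))\times(-\dl,\dl)$ satisfying $\tilde\vG(s,\la,\mu)=0$ and $\mu(0,c,y)=\tilde\mu_c(y)$. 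Implicit differentiation gives
$$\p_\la\mu\big|_{(s,\la)=(0,c)}=\f{1}{c_1(y)+3c_2(y)\tilde\mu_c^2(y)},$$
and a first-order Taylor expansion in $(s,\la-c)$ then yields
$$\mu(s,\la,y)=\tilde\mu_c(y)+\f{\la-c}{c_1(y)+3c_2(y)\tilde\mu_c^2(y)}+O(s+|\la-c|^2),$$
where the $O(s)$ remainder absorbs the linear-in-$s$ contribution coming from the perturbation $O(s^2+s\mu^2)$ in $\tilde\vG$. Substituting into $\xi=\Xi^*(y)+s\mu$ produces \eqref{3.12} with the stated remainder $O(s^2+s|\la-c|^2)$.

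There is essentially no substantive obstacle here. The only point to monitor is that $\ve(c)$ may be chosen independent of $y\in(-\dl,\dl)$, which follows from the uniform lower bound on $c_1(y)+3c_2(y)\tilde\mu_c^2(y)$; and the absence of the sign ambiguity that produced the two branches $\pm\mu_c(y)$ in Lemma \ref{lemma 1.4} reduces the argument to a single application of the implicit function theorem.
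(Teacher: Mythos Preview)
Your proposal is correct and follows essentially the same approach as the paper: both verify that \eqref{3.13} has a real root $\tilde\mu_c(y)$, compute $\p_\mu\tilde\vG(0,c,\tilde\mu_c(y))=c_1(y)+3c_2(y)\tilde\mu_c^2(y)>0$ and $\p_\la\tilde\vG(0,c,\tilde\mu_c(y))=-1$, and invoke the implicit function theorem as in Lemma~\ref{lemma 1.2} to obtain the expansion \eqref{3.12}. Your version is slightly more explicit about why the root is unique and why $\ve(c)$ can be taken uniform in $y$, but the underlying argument is identical.
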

\begin{proof}
It is easy to check that for $c\in\bR$,
there is a real root $\tilde\mu_c(y)$ of \eqref{3.13}.
On the other hand, by \eqref{3.11}, one has that
\ben
\label{3.14}\p_\mu\tilde\vG(0,\pm c,\tilde\mu_c(y))&=&c_1(y)+3c_2(y)\tilde\mu_c^2(y)>0,\\
\label{3.15}\p_\la\tilde\vG(0,\pm c,\tilde\mu_c(y))&=&-1.
\een
Similarly to the proof of  Lemma \ref{lemma 1.2}
by implicit function theorem, we then obtain \eqref{3.12}.
\end{proof}
\begin{remark}
It is obvious that for $y\in(-\dl,\dl)$, $c_1(y)+3c_2(y)\tilde\mu_c^2(y)\geq -\f{1}{\p_\xi \phi(0,0)}+O(\dl)>0$ holds.
\end{remark}

{\bf Proof of Theorem 1.1 (3).}
\begin{proof}
We now establish the behavior of the solution $u$ and its derivatives near $\G$.
Denote $B_\rho=\{(t,x,y):|x-x^*(y)|<\rho,\ |t-T^*(y)|<\rho,\ |y|<\rho\}$
for some positive constant $\rho$ defined below.
Let $\ve>0$ be the constant obtained in Lemma \ref{lemma 1.5}.
Set
\beq
\label{3.16}\O_{x}=B\cap\{(t,x,y):\ |x-x^{**}(t,y)|^\f{1}{3}<\ve,\ |t-T^*(y)|<\ve |x-x^*(y)|^\f{2}{3},\ y\in(-\dl,\dl)\},
\eeq
where $x^{**}(t,y)=x^*(y)+(\phi^*+\psi^*\f{\p_\eta \phi^*}{\p_\xi \phi^*})(t-T^*(y))$.
In addition, let
\ben
\label{3.17}\O_0&=&B\cap\{(t,x,y):\ t<T^*(y),\
|x-x^{**}(t,y)|<\f{2}{\ve^\f{3}{2}}(T^*(y)-t)^\f{3}{2},\ y\in(-\dl,\dl)\},\\
\label{3.18}\O_{t,0}&=&B\cap\{(t,x,y):\ 0<(t-T^*(y))^\f{1}{2}<\ve,\
|x-x^{**}(t,y)|<\ve(t-T^*(y))^\f{3}{2},\ y\in(-\dl,\dl)\},\\
\label{3.19}\O_{t,+}&=&B\cap\{(t,x,y):\ t>T^*(y),\
\f{\ve}{2}<\f{x-x^{**}(t,y)}{(T^*(y)-t)^\f{3}{2}}<\f{2}{\ve^\f{3}{2}},\ y\in(-\dl,\dl)\},\\
\label{3.20}\O_{t,-}&=&B\cap\{(t,x,y):\ t>T^*(y),\
-\f{2}{\ve^\f{3}{2}}<\f{x-x^{**}(t,y)}{(T^*(y)-t)^\f{3}{2}}<-\f{\ve}{2},\ y\in(-\dl,\dl)\}.
\een
where $\ve$ is the same as in \eqref{3.16}, and $\ve$ satisfies the requirements in
Lemma \ref{lemma 1.2}. By Heine-Borel property of compactness, one can
choose $\{c_{j,\pm},\ve_{j,\pm}=\ve_{j,\pm}(c_{j,\pm})\}_{j=1}^n$ and $\{c_{j,0},
\ve_{j,0}=\ve_{j,0}(c_{j,0})\}_{j=1}^n$ such that
\beq
\label{3.21}
\O_{t,+}\subset\cup_{j=1}^n\O_{t,+}^j,\ \O_{t,-}\subset\cup_{j=1}^n\O_{t,-}^j,\ \O_{0}\subset\cup_{j=1}^n\O_{0}^j,
\eeq
where
\ben
\label{3.22} \O_{t,+}^j&=&\{(t,x,y):\ 0<(t-T^*(y))^{\f{1}{2}}<\ve_{j,+},\ c_{j,+}-\ve_{j,+}<\f{x-x^{**}(t,y)}{(t-T^*(y))^\f{3}{2}}<c_{j,+}+\ve_{j,+}\},\\
\label{3.23} \O_{t,-}^j&=&\{(t,x,y):\ 0<(t-T^*(y))^{\f{1}{2}}<\ve_{j,-},\ c_{j,-}-\ve_{j,-}<\f{x-x^{**}(t,y)}{(t-T^*(y))^\f{3}{2}}<c_{j,-}+\ve_{j,-}\},\\
\label{3.24} \O_{0}^j&=&\{(t,x,y):\ 0<(T^*(y)-t)^{\f{1}{2}}<\ve_{j,0},\ c_{j,0}
-\ve_{j,0}<\f{x-x^{**}(t,y)}{(T^*(y)-t)^\f{3}{2}}<c_{j,0}+\ve_{j,0}\},
\een
and these domains satisfy the corresponding properties in Lemma \ref{lemma 1.4} and \ref{lemma 1.6}.

We take $\rho>0$ sufficiently small such that
\beq\label{3.25}
B_\rho=\O_{x,+}\cup\O_{x,-}\cup\O_{t,+}\cup\O_{t,-}\cup\O_{0}.
\eeq

In order to derive the behaviors of $u$ and its derivatives near $\G$, it suffices to
only consider them in the domains $\O_{x,+}$, $\O_{t,+}^j$ and  $\O_{0}^j$
since the other cases can be treated analogously.

It follows from direct computation that for fixed $y\in(-\dl,\dl)$,
\ben\label{3.26}
|u(t,x,y)-u(T^*(y),x^*(y),y)|&=&|u_0(\xi(t,x,y),Y(t,x,y))-u_0(\Xi^*(y),Y(T^*(y),\Xi^*(y),y))|\no\\
&\lesssim&|\xi(t,x,y)-\Xi^*(y)|+|t-T^*(y)|,
\een
here we have used the boundedness of the derivatives of $u_0$ and the variable $Y$.
Thus, for $(t,x,y)\in\O_{x,+}$,
\beq
\label{3.27}
|u(t,x,y)-u(T^*(y),x^*(y),y)|\lesssim \varsigma^\f{1}{3}+|t-T^*(y)|\lesssim
|\varsigma|^\f{1}{3},
\eeq
where and below $\varsigma=x-x^{**}(t,y)$;
for $(t,x)\in\O_{t,+}^j$,
\beq
\label{3.28}
|u(t,x,y)-u(T^*(y),x^*(y),y)|\lesssim  |\xi_+(t,x,y)-\Xi^*(y)|+(t-T^*(y))\lesssim  (t-T^*(y))^\f{1}{2};
\eeq
and for $(t,x)\in\O_{0}^j$,
\beq
\label{3.29}
|u(t,x,y)-u(T^*(y),x^*(y),y)|=|u_0(y(t,x))|\lesssim |\xi(t,x,y)-\Xi^*(y)|+(T^*(y)-t)\lesssim (T^*(y)-t)^\f{1}{2}.
\eeq
Collecting \eqref{3.27}, \eqref{3.28} and \eqref{3.29}, then \eqref{e0} is obtained.

Next we consider the estimate \eqref{e1} on the derivatives of $u$. By \eqref{2.5} and direct computation, it follows that
\beq\label{3.30}
\left(
  \begin{array}{ccc}
    \p_t \xi & \p_x \xi & \p_y \xi \\
    \p_t \eta & \p_x \eta & \p_y  \eta\\
  \end{array}
\right)
=-\f{1}{1+tH}\left(
  \begin{array}{ccc}
    \phi+t(\phi\p_\eta \psi-\p_\eta \phi\psi) & -(1+t\p_\eta \psi) & t\p_\eta \psi \\
    \psi+t(\p_\xi \phi\psi-\phi\p_\xi \psi) & t\p_\xi \psi & -(1+t\p_\xi \phi) \\
  \end{array}
\right).
\eeq
Then
\ben
\label{3.31}
\p_t u&=&\p_\xi u_0\p_t \xi+\p_\eta u_0\p_t\eta
=-\f{1}{1+tH}\left(\phi\p_\xi u_0+\psi\p_\eta u_0\right),\\
\p_x u&=&\f{\p_\xi u_0}{1+tH},\\
\p_y u&=&\f{\p_\eta u_0}{1+tH}.
\een
Near $\G$, we have
\ben\label{3.32}
&&1+tH(\xi,Y(t,\xi,y))\no\\
&=&(H^*+\p_\eta H^*\p_t Y^*)(t-T^*(y))+\f{1}{2}\left.T^*(y)\p_\xi^2
\left[H(\xi,Y(t,\xi,y))\right]\right|_{t=T^*(y),\ x=x^*(y)}(\xi-\Xi^*(y))^2\no\\
&&+O\left((t-T^*(y))^2+|t-T^*(y)||\xi-\Xi^*(y)|\right)\no\\
&=&\left(-1+O(\dl)\right)(t-T^*(y))+(3+3\th_0+O(\dl))(\xi-\Xi^*(y))^2\no\\
&&+O\left((t-T^*(y))^2+|t-T^*(y)||\xi-\Xi^*(y)|\right).
\een
For $(t,x)\in\O_{t,+}^j$, by \eqref{3.1} in Lemma \ref{lemma 1.4} we have
\ben\label{3.33}
&&1+tH(\xi,Y(t,\xi,y))\no\\
&=&\left(-1
+(3+3\th_0)\mu_{c_{j,+}}^2(y)+O(\dl)\right)(t-T^*(y))+O((t-T^*(y))^2+|t-T^*(y)||\xi-\Xi^*(y)|)\no\\
&\gtrsim&(t-T^*(y))\no\\
&\gtrsim&|t-T^*(y)|+|\varsigma|^\f{2}{3},
\een
where the fact of $-1
+(3+3\th_0)\mu_{c_{j,+}}^2(y)+O(\dl)>0$ in Remark \ref{remark 1.3} has been used.

For $(t,x)\in\O_{x}$, by \eqref{3.8} in Lemma \ref{lemma 1.5} we have
\ben\label{3.34}
&&1+tH(\xi,Y(t,\xi,y))\no\\
&=&\left(-1+O(\dl)\right)(t-T^*(y))
+(3+3\th_0+O(\dl))\f{\varsigma^\f{2}{3}}{\left(c_2(y)\right)^\f{2}{3}}
+O((t-T^*(y))^2+|t-T^*(y)||\varsigma|^\f{1}{3})\no\\
&\gtrsim&\varsigma^\f{2}{3}\no\\
&\gtrsim&|t-T^*(y)|+\varsigma^\f{2}{3}.
\een

For $(t,x)\in\O_{0,+}^j$, by \eqref{3.12} in Lemma \ref{lemma 1.6} we arrive at
\ben\label{3.35}
&&1+tH(\xi,Y(t,\xi,y))\no\\
&=&\left(-1
+(3+3\th_0)\tilde\mu_{c_{j,0}}^2(y)+O(\dl)\right)(t-T^*(y))+O((t-T^*(y))^2+|t-T^*(y)||\xi-\Xi^*(y)|)\no\\
&\gtrsim&(T^*(y)-t)\no\\
&\gtrsim&|t-T^*(y)|+|\varsigma|^\f{2}{3},
\een
where the fact of $1+(3+3\th_0)\tilde\mu_{c_{j,0}}^{2}>0$ has been used.

Therefore, $1+tH\gtrsim|t-T^*(y)|+|\varsigma|^\f{2}{3}$ holds for $(t,x,y)\in B_\rho$.
On the other hand for fixed $y\in(-\dl,\dl)$,
we denote the tangent derivative along the $\Si$ on $\G$ by
\ben\label{3.36}
&&\p_{T} u(t,x,y)\triangleq \p_t u+\left(\phi^*+T^*(y)\p_\eta \phi^*\p_t Y^*\right)\p_x u\no\\
&=&-\f{1}{1+tH}\left[\left(\phi\p_\xi u_0+\psi\p_\eta u_0\right)
-\left(\phi^*+\psi^*\f{\p_\eta \phi^*}{\p_\xi \phi^*}\right)\p_\xi u_0\right]\no\\
&=&-\f{1}{1+tH}\left[\left(\phi-\phi^*\right)
+\left(\psi\f{\p_\eta \phi}{\p_\xi \phi}-\psi^*\f{\p_\eta \phi^*}{\p_\xi \phi^*}\right)\right]\p_\xi u_0,
\een
here we have used the fact of  $\p_\xi \phi\p_\eta u_0=\p_\eta \phi\p_\xi u_0$. Therefore similarly  to
the proof of \eqref{3.26}, we have
\beq\label{3.37}
|\p_{T} u(t,x,y)|\lesssim \left(|t-T^*(y)|^{\f{1}{2}}+|\varsigma|^{\f{1}{3}}\right)^{-1},
\eeq
and then \eqref{eT} is proved.
\end{proof}

\vskip 0.5 true cm

{\bf Proof of Theorem 1.1.}

\begin{proof}
Theorem 1.1 (1) and (2) come from Lemma 2.3 directly. Theorem 1.1 (3) has been obtained.
\end{proof}

\section{Appendix}

In this section, we study problem \eqref{Y-6}
\ben\label{4.1}
\left\{
\begin{array}{l}
\f{d}{ds}y(s;\beta)=sP(s,\La(s;\beta),y(s;\beta)),\\
s\f{d}{ds}\La(s;\beta)=Q(s,\La(s;\beta),y(s;\beta)),\\
y(0;\beta)=\beta,\ \La(0;\beta)=0,
\end{array}
\right.
\een
where $|\beta|\le\dl$ with $\dl>0$ being small, $P,\ Q\in C^2$ satisfy
\beq\label{4.2}
|P(s,\La,y)|\le M|y+s^2+s\La|,\ \text{$Q(s,\La,y)=-\al \La+\tilde Q(s,\La,y)$ with $|\tilde Q(s,\La,y)|\le M|s+y\La+\La^2|$},
\eeq
and the constants $M>1$ and $\al\ge2$.

\begin{lemma}\label{lemma 4.1}
For small $\dl$ and $\ve>0$, \eqref{4.1} with assumption \eqref{4.2} admits a unique local solution
\beq\label{4.3}
(y(s;\beta),\La(s;\beta))\in C^2\left([0,\ve]\times[-\dl,\dl]\right).
\eeq
\end{lemma}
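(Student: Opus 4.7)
The plan is to convert \eqref{4.1} into a pair of integral equations suited to fixed-point iteration, and then bootstrap the continuous solution to $C^2$ regularity jointly in $(s,\beta)$.

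First I would rewrite the singular second equation. Treating $s\Lambda' = -\alpha\Lambda + \tilde Q$ as a Fuchsian ODE and using the integrating factor $s^\alpha$, so that $(s^\alpha\Lambda)' = s^{\alpha-1}\tilde Q$, and selecting the unique branch regular at $s=0$, one obtains
$$\Lambda(s;\beta) = s^{-\alpha}\int_0^s \tau^{\alpha-1}\tilde Q\bigl(\tau,\Lambda(\tau;\beta),y(\tau;\beta)\bigr)\,d\tau,$$
coupled with
$$y(s;\beta) = \beta + \int_0^s \tau\, P\bigl(\tau,\Lambda(\tau;\beta),y(\tau;\beta)\bigr)\,d\tau.$$
Because $|\tilde Q|\le M(|s|+|y\Lambda|+\Lambda^2)$ and $\alpha\ge 2$, the $s^{-\alpha}$ singularity is absorbed: if $|\Lambda|\le Ks$ and $|y|\le 2\delta$ on $[0,\varepsilon]$, then $|\tilde Q|\lesssim s$, whence $|\Lambda(s;\beta)|\lesssim s$ and $|y(s;\beta)-\beta|\lesssim s^2$.

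Next I would perform a Banach fixed-point argument. Let $X_{\varepsilon,K}$ be the complete metric space of continuous pairs $(y,\Lambda):[0,\varepsilon]\to\mathbb{R}^2$ with $|y(s)-\beta|\le Ks^2$ and $|\Lambda(s)|\le Ks$, equipped with the sup-norm, and define $\Phi$ via the right-hand sides of the two integral equations. Routine estimates, exploiting $\alpha\ge 2$ together with the smallness of $\varepsilon$ and $\delta$, show that $\Phi$ maps $X_{\varepsilon,K}$ into itself for a fixed $K$ depending only on $M$ and $\alpha$, and that $\Phi$ is a strict contraction; continuous dependence on $\beta$ is obtained from the same argument by comparing two initial data.

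For the $C^2$ regularity in \eqref{4.3}, I would differentiate the integral equations. Smoothness in $s>0$ is immediate from the regular ODE theory away from $s=0$. For differentiability in $\beta$, the pair $(y_\beta,\Lambda_\beta)=(\partial_\beta y,\partial_\beta\Lambda)$ formally solves a linear Fuchsian system of the same structure: $y_\beta(0)=1$, $\Lambda_\beta(0)=0$, with inhomogeneities built from $P_y,P_\Lambda,\tilde Q_y,\tilde Q_\Lambda$. The integrating-factor representation then yields
$$\Lambda_\beta(s;\beta) = s^{-\alpha}\int_0^s \tau^{\alpha-1}\bigl(\tilde Q_\Lambda\Lambda_\beta + \tilde Q_y y_\beta\bigr)\,d\tau,$$
and since the structural bound on $\tilde Q$ forces $\tilde Q_\Lambda,\tilde Q_y = O(s+|\Lambda|+|y\Lambda|)$, the same contraction produces continuous $y_\beta,\Lambda_\beta$. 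Iterating this differentiation once more, together with $P,Q\in C^2$, delivers $\partial_\beta^2 y$, $\partial_\beta^2\Lambda$; matching mixed partials then gives joint $C^2$ regularity.

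The main obstacle is keeping the factor $s^{-\alpha}$ under control through repeated differentiation: at each stage one must verify that the integrand $\tau^{\alpha-1}(\cdots)$ carries an additional power of $\tau$ so that $s^{-\alpha}\int_0^s \tau^{\alpha-1}(\cdots)\,d\tau$ remains bounded. This relies crucially on $\alpha\ge 2$ together with the vanishing structure $\tilde Q=O(s+|y\Lambda|+\Lambda^2)$, which is inherited by the derivatives through either an explicit factor of $s$ or by factors of $\Lambda=O(s)$. Once this cancellation has been checked at the level of first and second $\beta$-derivatives, the standard Gronwall-type bookkeeping closes the argument and yields the $C^2$ solution $(y(s;\beta),\Lambda(s;\beta))$ on $[0,\varepsilon]\times[-\delta,\delta]$.
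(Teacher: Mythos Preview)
Your approach is essentially the same as the paper's: both convert the Fuchsian equation via the integrating factor $s^{\alpha}$ into the same pair of integral equations and run a contraction/iteration on the set $\{|y|\lesssim\delta,\ |\Lambda|\le Ks\}$, using $\alpha\ge 2$ and the vanishing structure of $\tilde Q$ to absorb the $s^{-\alpha}$. Your treatment of the $C^2$ regularity in $(s,\beta)$, by differentiating the integral equations and re-running the contraction on the linearized Fuchsian system, is in fact more detailed than the paper, which simply asserts the $C^2$ conclusion from \eqref{4.2} and the iteration; one small correction is that the bound on $\tilde Q$ yields $\tilde Q_\Lambda=O(|y|+s+|\Lambda|)$ and $\tilde Q_y=O(|\Lambda|)$ rather than your stated form, but since $|y|\le 2\delta$ on the solution this still gives the needed smallness.
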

\begin{proof}
Taking the following iterative scheme
\ben\label{4.4}
\left\{
\begin{array}{l}
y_{k}(s;\beta)=\beta+\int_0^s\th P(\th,\La_{k-1}(\th),y_{k-1}(\th))~d\th,\\
\La_{k}(s;\beta)=s^{-\al}\int_0^s \th^{\al-1}\tilde Q(\th,\La_{k-1}(\th),y_{k-1}(\th))~d\th,\\
y_0\equiv\beta,\ \La_0\equiv0,
\end{array}
\right.
\een
where $(y,\La)\in S\triangleq\{(y,\La)\in C([0,\ve]): |y|\le 2\dl, |\La|\le M s\}$.

If $(y_{k-1},\La_{k-1})\in S$, we then have that for small $\dl$ and $\ve$,
\ben\label{4.5}
|y_k|&\le&\dl+\ve\|P\|_{L^\i([0,\ve]\times S)}\le \dl+\ve M(2\dl+\ve^2+M\ve^2)\le 2\dl,\\
|\La_k|&\le& s^{-\al}\int_0^s \th^{\al}\|\f{\tilde Q}{\th}\|_\i~d\th\le\f{s}{\al+1}\|\f{\tilde Q}{s}\|_{L^\i([0,\ve]\times S)}
\le \f{Ms}{\al+1}(1+2M\dl+M^2\ve)\le Ms.
\een
In addition,
\ben\label{4.6}
|y_k-y_{k-1}|&\le&\int_0^{s}\left|\th\left( P(\th,\La_{k-1}(\th),y_{k-1}(\th))-P(\th,\La_{k-2}(\th),y_{k-2}(\th))\right)\right|~d\th\no\\
&\le& C\int_0^s\th\left(\left|y_{k-1}(\th)-y_{k-2}(\th)\right|
+\left|\La_{k-1}(\th)-\La_{k-2}(\th)\right|\right)~d\th\no\\
&\le& C\ve\left(\|y_{k-1}-y_{k-2}\|_{L^\i[0,\ve]}+\|\La_{k-1}-\La_{k-2}\|_{L^\i[0,\ve]}\right),\\
|\La_k-\La_{k-1}|&\le& s^{-\al}\int_0^s \th^{\al-1}\left|\tilde Q(\th,\La_{k-1}(\th),y_{k-1}(\th))-\tilde Q (\th,\La_{k-2}(\th),y_{k-2}(\th))\right|~d\th\no\\
&\le& Cs^{-\al}\int_0^s \th^{\al-1}\left(2M\th\left|y_{k-1}(\th)-y_{k-2}(\th)\right|+
4\dl\left|\La_{k-1}(\th)-\La_{k-2}(\th)\right|\right.\no\\
&&\left.+2M\th\left|\La_{k-1}(\th)-\La_{k-2}(\th)\right|
\right)~d\th\no\\
&\le& C\left[\ve\left(\|y_{k-1}-y_{k-2}\|_{L^\i[0,\ve]}+\|\La_{k-1}-\La_{k-2}\|_{L^\i[0,\ve]}\right)
+\dl\|\La_{k-1}-\La_{k-2}\|_{L^\i[0,\ve]}\right],
\een
where $C$ is a positive constant independent of $\dl$ and $\ve$. Thus for small $\dl$ and $\ve$,
$\left\{(y_k,\La_k)\right\}$ converges uniformly to some functions $(y(s),\La(s))\in C[0,\ve]$ satisfying \eqref{4.4},
moreover $(y(s),\La(s))\in S$. Furthermore, by \eqref{4.2} and \eqref{4.4}, $y(s)$ and $\La(s)\in C^2[0,\ve]$ hold.
\end{proof}

\end{document}